\newcommand{\MR}[1]{\href{http://www.ams.org/mathscinet-getitem?mr=#1}{MR#1}}
\numberwithin{equation}{section}
\theoremstyle{plain}
\newtheorem{thm}[equation]{Theorem}
\newtheorem{prop}[equation]{Proposition}
\newtheorem{cor}[equation]{Corollary}
\newtheorem{lem}[equation]{Lemma}
\crefname{thm}{Theorem}{Theorems}
\Crefname{thm}{Theorem}{Theorems}
\crefname{cor}{Corollary}{Corollaries}
\Crefname{cor}{Corollary}{Corollaries}
\crefname{lem}{Lemma}{Lemmas}
\Crefname{lem}{Lemma}{Lemmas}
\theoremstyle{definition}
\newtheorem{notation}[equation]{Notation}
\newtheorem{defn}[equation]{Definition}
\newtheorem*{ack}{Acknowledgments}
\theoremstyle{definition}
\newtheorem{rem}[equation]{Remark}
 \newcounter{case}
 \newenvironment{case}[1][\unskip]{\refstepcounter{case}
 \medskip \noindent \textbf{Case \thecase.}\em\ #1\ }{\unskip\upshape}
 \renewcommand{\thecase}{\arabic{case}}
\Crefname{case}{Case}{Cases}
\newenvironment{claim}[1][\unskip]{\medskip \noindent 
{\em Claim.\ #1}}{\medbreak}
 \renewcommand{\Cref}{\cref}
 \newcommand{\pref}[1]{{\upshape(\ref{#1})}}
 \newcommand{\fullref}[2]{\ref{#1}\pref{#1-#2}}
 \newcommand{\fullcref}[2]{\cref{#1}\pref{#1-#2}}
\newcommand{\rational}{\mathbb{Q}}
\newcommand{\integer}{\mathbb{Z}}
\newcommand{\real}{\mathbb{R}}
\newcommand{\complex}{\mathbb{C}}
\newcommand{\ints}{\mathcal{O}}
\newcommand{\Rrank}{\rank_{\real}}
\newcommand{\Qrank}{\rank_{\rational}}
\newcommand{\Res}[1]{\mathop{\mathrm{Res}}\nolimits_{#1}}
\newcommand{\one}{_S^{(1)}}
\DeclareMathOperator{\rank}{rank}
\DeclareMathOperator{\GL}{GL}
\DeclareMathOperator{\graph}{graph}
\DeclareMathOperator{\Rad}{Rad}
\DeclareMathOperator{\unip}{unip}
\DeclareMathOperator{\Char}{char}
\newcommand{\Zar}[1]{\overline{#1}{}}
\newcommand{\iso}{\cong}
\newcommand{\bigtimes}{\mathop{\hbox{\Large$\times$}}}
\renewcommand{\AA}{\mathbf{A}}
\newcommand{\BB}{\mathbf{B}}
\newcommand{\CC}{\mathbf{C}}
\newcommand{\GG}{\mathbf{G}}
\newcommand{\HH}{\mathbf{H}}
\newcommand{\MM}{\mathbf{M}}
\newcommand{\NN}{\mathbf{N}}
\newcommand{\RR}{\mathbf{R}}
\newcommand{\TT}{\mathbf{T}}
\newcommand{\UU}{\mathbf{U}}
\newcommand{\VV}{\mathbf{V}}
\newcommand{\WW}{\mathbf{W}}
\newcommand{\ZZ}{\mathbf{Z}}
\DeclareMathOperator{\GGL}{\bf GL}
\newcommand{\almsubset}{\stackrel{.}{\subseteq}}
\newcommand\bigset[2]{\left\{\, #1 
 \mathrel{\left| \vphantom {\left\{ #1 \mid #2 \right\} }
 \right.} #2 \,\right\} }
\newcommand{\noprelistbreak}{\unskip\medskip\@nobreaktrue\nopagebreak} 
\def\cref@thmoptarg[#1]#2#3#4{%
  \ifhmode\unskip\unskip\par\fi%
  \normalfont%
  \trivlist%
  \let\thmheadnl\relax%
  \let\thm@swap\@gobble%
  \thm@notefont{\fontseries\mddefault\upshape}%
  \thm@headpunct{.}% add period after heading
  \thm@headsep 5\p@ plus\p@ minus\p@\relax%
  \thm@space@setup%
  #2% style overrides
  \@topsep \thm@preskip               % used by thm head
  \@topsepadd \thm@postskip           % used by \@endparenv
  \def\@tempa{#3}\ifx\@empty\@tempa%
    \def\@tempa{\@oparg{\@begintheorem{#4}{}}[]}%
  \else%
    \refstepcounter[#1]{#3}%  <<< cleveref modification
    \def\dth@counter{} % <<< correction for hyperref by DWM 8/2013
    \def\@tempa{\@oparg{\@begintheorem{#4}{\csname the#3\endcsname}}[]}%
  \fi%
  \@tempa}
\begin{document}

\title{Nonarchimedean superrigidity of solvable $S$-arithmetic groups}

\author[uleth]{Dave Witte Morris\corref{cor1}}
\ead{Dave.Morris@uleth.ca}
\ead[url]{http://people.uleth.ca/~dave.morris/}
\address[uleth]{Department of Mathematics and Computer Science, 
	\\ University of Lethbridge, 
	Lethbridge, Alberta, T1K\,6R4, Canada
	}

\author[uchgo]{Daniel Studenmund}
\ead{dhs@math.uchicago.edu}
\ead[url]{http://math.uchicago.edu/~dhs/}
\address[uchgo]{Department of Mathematics, University of Chicago, Chicago, IL 60637}
    
\cortext[cor1]{Corresponding author}

\begin{abstract}
Let $\Gamma$~be an $S$-arithmetic subgroup of a solvable algebraic group~$\GG$ over an algebraic number field~$F$, such that the finite set~$S$ contains at least one place that is nonarchimedean. We construct a certain group $H = \bigl(\Zar{\Res{F/\rational} \Gamma}\bigr){}_{\Char S}^\Gamma$, such that if $L$~is any local field and $\alpha \colon \Gamma \to \GL_n(L)$ is any homomorphism, then $\alpha$ virtually extends (modulo a bounded error) to a continuous homomorphism defined on some finite-index subgroup of~$H$. 
In the special case where $F = \rational$, $\Rrank \GG = 0$, and $\Gamma$~is Zariski-dense in~$\GG$, we may let $H = \GG_S$.

We also point out a generalization that does not require $\GG$ to be solvable.
\end{abstract}

\begin{keyword}
superrigidity, solvable group, algebraic group, $S$-arithmetic group, representation
\MSC[2010]  20G30 \sep 22E40
\end{keyword}

\maketitle

\section{Introduction}

Roughly speaking, a subgroup~$\Lambda$ of a topological group~$G$ is said to be ``superrigid'' if every finite-dimensional representation of~$\Lambda$ is the restriction of a continuous representation of~$G$. However, this need only be true up to finite-index subgroups and modulo a compact subgroup of the range:

\begin{defn}[{cf.~\cite[Thm.~2, p.~2]{MargBook}}] \label{SuperDefn}
 Let $\Gamma$ be a (countable) subgroup of a topological group~$G$, and let $L$ be a local field. 
We say that $\Gamma$ is \emph{$L$-superrigid} in~$G$ if, for every
 homomorphism $\alpha \colon \Gamma \to \GL_n(L)$,
 there are
 \noprelistbreak
 \begin{itemize}
 \item a finite-index, open subgroup~$G_0$ of~$G$,
 \item a finite-index subgroup~$\Gamma_0$ of $\Gamma \cap G_0$,
 \item a finite, normal subgroup~$N$ of $H$, where $H$ is the group of $L$-points of the Zariski closure of $\alpha(\Gamma_0)$,
 \item a continuous homomorphism $\widehat\alpha \colon G_0 \to H/N$,
 and
 \item a compact subgroup~$K$ of $H/N$ that centralizes $\widehat\alpha(G_0)$,
 \end{itemize}
 such that $\alpha(\gamma) N \in \widehat\alpha(\gamma) K$ for all $\gamma \in \Gamma_0$.
  \end{defn}

Let $\Gamma$ be an $S$-arithmetic subgroup of a solvable algebraic group~$\GG$ over an algebraic number field~$F$. It is known that $\Gamma$ is $\real$-superrigid in a certain closed subgroup $\overline{\GG(\ints_S)}$ of $\GG_S$ \cite{Witte-NonarchSuper}. (Hence, $\Gamma$ is also $\complex$-superrigid in $\overline{\GG(\ints_S)}$.)
We now show that if $S$ contains at least one nonarchimedean place, then $\Gamma$~is also superrigid over the other local fields. (That is, it is $L$-superrigid when $L$~is a $p$-adic field~$\rational_p$ or a function field $\mathbb{F}\!_q((T))$.) However, $\overline{\GG(\ints_S)}$ will be replaced with a somewhat different group $\bigl(\Zar{\Res{F/\rational} \Gamma}\bigr){}_{\Char S}^\Gamma$.
Before stating a sample result, we recall some fairly standard terminology:

\begin{defn}
Suppose $\GG$ is an algebraic group defined over a finite extension~$F$ of~$\rational$, and $S$ is a finite set of places of~$F$. 
(All algebraic groups in this paper are assumed to be affine.)
	\begin{enumerate}
	\item Subgroups $\Gamma$ and~$\Lambda$ of~$\GG$ are \emph{commensurable} if $\Gamma \cap \Lambda$ has finite index in both~$\Gamma$ and~$\Lambda$.
	\item For any field~$L$ that contains~$F$, $\rank_L \GG$ is the dimension of any maximal $L$-split torus in~$\GG$.
	\item $\ints_S$ is the ring of $S$-integers of~$F$.
	\item A subgroup~$\Gamma$ of~$\GG$ is \emph{$S$-arithmetic} if it is commensurable to $\GG(\ints_S)$. 
	\item $\displaystyle \GG_S  = \bigtimes_{p \in S \cup S_\infty} \GG(F_v) $, where $F_v$ is the completion of~$F$ at the place~$v$, and $S_\infty$~is the set of all archimedean places of~$F$.
	\end{enumerate}
\end{defn}

\begin{rem}
In the definition of an $S$-arithmetic group, it is usually assumed that $S$ contains all of the archimedean places, but we do not make this requirement. Thus, the groups we call ``$S$-arithmetic'' would be called ``$(S \cup S_\infty)$-arithmetic'' in the usual terminology.
\end{rem}

Here is an archimedean superrigidity theorem that is easy to state:

\begin{thm}[Witte {\cite[Thm.~1.6]{Witte-NonarchSuper}}] \label{ArchSuperQrank0}
Suppose\/ $\Gamma$~is a Zariski-dense, $S$-arithmetic subgroup of a solvable algebraic\/ $\rational$-group\/~$\GG$. If\/ $\Qrank \GG = 0$, then\/ $\Gamma$ is\/ $\real$-superrigid in\/~$\GG_S$.
\end{thm}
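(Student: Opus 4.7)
The plan is to reduce the problem to Mal'cev rigidity on the unipotent radical of~$\GG$, with the compact toral correction absorbed into the group~$K$ of \cref{SuperDefn}. As a preliminary step, I would replace~$\Gamma$ by a torsion-free, polycyclic, finite-index subgroup, and $\GG$ by its identity component (both harmless given the flexibility in \cref{SuperDefn}). Under the hypothesis $\Qrank \GG = 0$, $S$-arithmetic reduction theory for solvable groups implies that $\Gamma$ is cocompact in~$\GG_S$ and that its projection to $\GG(\real)^{\circ}$ is a cocompact discrete subgroup, while its projections to the nonarchimedean factors $\GG(\rational_p)$ land in compact subgroups. It therefore suffices to produce the continuous extension on a finite-index open subgroup of~$\GG(\real)^{\circ}$ and to pull it back along the projection $\GG_S \to \GG(\real)^{\circ}$; the nonarchimedean factors contribute only through the compact group~$K$.

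Let $\HH \subseteq \GL_n(\real)$ be the real points of the Zariski closure of~$\alpha(\Gamma)$, which is a solvable real algebraic group with unipotent radical~$\UU$, and let $\GG_u$ denote the unipotent radical of~$\GG$. Then $\Gamma_u := \Gamma \cap \GG_u(\rational)$ is commensurable with a lattice in the simply connected nilpotent Lie group $\GG_u(\real)$. By Mal'cev's rigidity theorem for nilpotent Lie groups, a finite-index subgroup of $\alpha|_{\Gamma_u}$ extends uniquely to a continuous Lie group homomorphism $\widehat\alpha_u \colon \GG_u(\real) \to \UU$. This takes care of the unipotent direction.

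For the remaining toral direction, observe that $\GG/\GG_u$ is a $\rational$-anisotropic torus, so every $\rational$-character of~$\GG$ is bounded on~$\Gamma$. A Dirichlet-unit-type argument then forces the image of~$\Gamma$ in~$\HH/\UU$ to be relatively compact. Pairing this with $\widehat\alpha_u$ and absorbing the compact toral piece into~$K$ produces the desired continuous extension $\widehat\alpha$, defined on a finite-index open subgroup of~$\GG(\real)^{\circ}$, and hence of~$\GG_S$ via the archimedean projection.

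The main obstacle I anticipate is verifying that the compact toral correction actually \emph{centralizes} $\widehat\alpha(G_0)$ modulo the finite subgroup~$N$, as required in \cref{SuperDefn}. The anisotropy hypothesis $\Qrank \GG = 0$ is doing most of the work here: it confines the toral image of~$\Gamma$ to a subgroup commensurable with a compact torus of~$\HH$, and the solvability of~$\HH$ then allows one to arrange, possibly after conjugating inside~$\HH$, that the correction centralizes $\widehat\alpha_u(\GG_u(\real))$ and therefore all of $\widehat\alpha(G_0)$.
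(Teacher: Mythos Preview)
The paper does not give its own proof of this theorem; it is quoted from \cite[Thm.~1.6]{Witte-NonarchSuper} as background for the new nonarchimedean results. So there is no proof here to compare against, but your outline has a genuine gap that is worth flagging.

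Your reduction step asserts that, under $\Qrank \GG = 0$, the projection of~$\Gamma$ to $\GG(\real)^\circ$ is a cocompact discrete subgroup, while the projections to the nonarchimedean factors land in compact subgroups. Both assertions fail as soon as $S$ contains a nonarchimedean place. Take $\GG$ unipotent (which certainly has $\Qrank \GG = 0$) and $p \in S$: then $\Gamma$ is commensurable with $\UU(\integer[1/p,\dots])$, which is \emph{not} discrete in $\UU(\real)$ (e.g.\ $\integer[1/p]$ is dense in~$\real$) and \emph{not} bounded in $\UU(\rational_p)$ (e.g.\ $\integer[1/p]$ is unbounded in~$\rational_p$). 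The same phenomenon occurs on the toral side: for the norm-one torus of $\rational(i)/\rational$ with $5 \in S$, the $S$-unit $(3+4i)/5$ has nonzero $5$-adic valuation and generates a dense subgroup of the compact group $\TT(\real)$. Consequently Mal'cev rigidity, which needs $\Gamma_u$ to be a lattice in $\GG_u(\real)$, does not apply, and there is no way to manufacture~$\widehat\alpha$ purely on the archimedean factor and then pull back.

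What actually happens (and what the cited paper and the present paper both exploit) is that $\Gamma$ is a lattice only in the full product $\GG_S$, so the extension must be built there. On the unipotent side one replaces Mal'cev rigidity by the rank--dimension criterion (see \cref{SuperRank} here, or \cite[Thm.~2.11]{RaghunathanBook}): $U_\Gamma$ has Hirsch rank equal to $\dim \UU$ even though it is not discrete in any single factor, and this is enough to force a homomorphism with unipotent image to extend rationally. On the toral side one works with $\TT\!_S$ and its maximal compact subgroup rather than with $\TT(\real)$ alone. Your instinct that anisotropy controls the toral correction is right, but the control happens across all places simultaneously, not place-by-place.
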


Our results imply that if the rank of~$\GG$ is~$0$ over~$\real$, not just over~$\rational$, then $\Gamma$ is usually also superrigid over all other local fields:

\begin{thm} \label{SuperRrank0}
Suppose\/ $\Gamma$~is a Zariski-dense, $S$-arithmetic subgroup of a solvable algebraic\/ $\rational$-group\/~$\GG$. If\/ $\Rrank \GG = 0$ and $S$~contains at least one nonarchimedean place, then\/ $\Gamma$ is $L$-superrigid in $\GG_S$, for every local field~$L$.
\end{thm}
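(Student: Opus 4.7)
The plan is to deduce \cref{SuperRrank0} as an immediate specialization of the general $L$-superrigidity theorem announced in the abstract, so that the only real work is to identify the abstractly-defined target group $H = \bigl(\Zar{\Res{F/\rational} \Gamma}\bigr){}_{\Char S}^\Gamma$ with $\GG_S$ under the three present hypotheses.

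First I would apply the main theorem of the paper to the given $\Gamma$ and any homomorphism $\alpha \colon \Gamma \to \GL_n(L)$: this yields a finite-index subgroup $\Gamma_0 \subseteq \Gamma$, a continuous homomorphism $\widehat\alpha$ defined on a finite-index open subgroup of $H$, and the normal finite subgroup $N$ and compact centralizing subgroup $K$ required by \cref{SuperDefn}. Granting this, it suffices to show that under the hypotheses of \cref{SuperRrank0} we have $H = \GG_S$, for then continuity of $\widehat\alpha$ on a finite-index open subgroup of $\GG_S$ is exactly the assertion of $L$-superrigidity.

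To collapse $H$ to $\GG_S$ I would proceed in two steps. Because $F = \rational$, the Weil restriction of scalars is the identity, so $\Zar{\Res{F/\rational}\Gamma} = \Zar{\Gamma}$, and Zariski density of $\Gamma$ in $\GG$ collapses this further to $\GG$ itself. Thus $H = \GG_{\Char S}^\Gamma$, and what remains is to unwind the decoration ${}_{\Char S}^\Gamma$. The nonarchimedean factors of $H$ should be indexed by places of $\rational$ whose residue characteristic lies in $\Char S$, which over $\rational$ are exactly the primes in $S$, yielding $\bigtimes_{v \in S} \GG(\rational_v)$; the hypothesis that $S$ has at least one nonarchimedean place ensures the construction is actually doing something. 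For the archimedean place, the hypothesis $\Rrank \GG = 0$ is what is needed: it prevents the $\Gamma$-decoration from having to enlarge or replace the real factor (as would happen in the presence of an $\real$-split torus), so $\GG(\real)$ enters exactly once. Combining, $H = \bigtimes_{v \in S \cup S_\infty} \GG(F_v) = \GG_S$.

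I expect the main obstacle to be this careful bookkeeping identifying $H$ with $\GG_S$ — in particular verifying that no extra places, and no extra anisotropic padding at the archimedean place, sneak into the decoration ${}_{\Char S}^\Gamma$ under the stated hypotheses. The archimedean portion of this verification can lean on \cref{ArchSuperQrank0} applied as a black box, which is available since $\Qrank \GG \le \Rrank \GG = 0$, while the nonarchimedean portion should follow directly from the construction of $H$ in the main body of the paper. Once $H = \GG_S$ is in hand, the conclusion is immediate.
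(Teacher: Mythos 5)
Your overall strategy --- specialize the general theorem (\cref{SuperRes}) and then identify the target group --- is the same as the paper's, but the identification step contains a genuine error: under the hypotheses of \cref{SuperRrank0}, the group $H=\bigl(\Zar{\Res{F/\rational}\Gamma}\bigr){}_{\Char S}^{\Gamma}$ is in general \emph{not} equal to $\GG_S$. The decoration ${}^{\Gamma}$ is not bookkeeping about which places occur: by \fullcref{GSDefn}{C}, $\GG_S^{\Gamma}=\Gamma\cdot C$, where $C/\UU_S$ is the maximal compact subgroup of $(\GG^\circ)_S/\UU_S$; this cuts the group down at the \emph{same} set of places. (Over $F=\rational$ you are right that $\Zar{\Res{F/\rational}\Gamma}=\GG$ by Zariski density and that $\GG_{\Char S}=\GG_S$, so that part of the bookkeeping is fine.) But if, say, $\GG$ is a $\rational$-torus that is anisotropic over~$\real$ yet splits over some $\rational_p$ with $p\in S$, then $\Gamma\cdot C$ is typically a proper (finite-index) subgroup of $\GG_S$, so the equality $H=\GG_S$ you assert simply fails; and your appeal to \cref{ArchSuperQrank0} has no bearing on this purely group-theoretic question.

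What is true, and what the paper actually proves in the remark following \cref{SuperRes}, is that $\GG_S^{\Gamma}=\Gamma\cdot C$ has \emph{finite index} in $\GG_S$: the hypothesis $\Rrank\GG=0$ makes $C$ open (the maximal compact subgroup of the abelian quotient is open at each nonarchimedean place, and at the real place precisely because the torus quotient is $\real$-anisotropic), and it also forces $\Qrank\GG=0$, so $\GG_S/\Gamma$ is compact by \cite[Thm.~5.5(1), p.~260]{PlatonovRapinchukBook}; an open subgroup containing a cocompact lattice has finite index. Superrigidity in a finite-index open subgroup then gives superrigidity in $\GG_S$ itself, because \cref{SuperDefn} already permits passing to the finite-index open subgroup~$G_0$. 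Your proposal is missing exactly these two inputs (openness of~$C$ and cocompactness of~$\Gamma$) together with the finite-index reduction; once they replace the false equality $H=\GG_S$, the argument becomes the paper's proof.
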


To state a more general theorem that applies to solvable algebraic groups over any algebraic number field, not only~$\rational$, we introduce additional notation.

\begin{notation} \label{GSDefn}
Let $\GG$~be a solvable algebraic group over a finite extension~$F$ of~$\rational$, and let $\Gamma$~be an $S$-arithmetic subgroup of~$\GG$, for some finite set $S$ of  places of~$F$.
	\begin{enumerate}

	\item $\Res{F/\rational} \GG$ is the $\rational$-group obtained from~$\GG$ by restriction of scalars.

	\item $\Zar{\Res{F/\rational} \Gamma}$ is the Zariski closure of~$\Gamma$ in $\Res{F/\rational} \GG$.

	\item $\Char v$ denotes the residue characteristic of a nonarchimedean place~$v$ of~$F$  (so $F_v$ is a finite extension of~$\rational_p$ if $p = \Char v$). 

	\item $\Char S = \{\, \Char v \mid \text{$v \in S$ and $v$~is nonarchimedean} \,\} $.
	
	\item  \label{GSDefn-C}
	If $\UU = \unip \GG$ is the unipotent radical of~$\GG$, and $C/\UU_S$ is the (unique) maximal compact subgroup of the abelian group $(\GG^\circ)_S/\UU_S$, where $\GG^\circ$ is the identity component of~$\GG$, then
	$\GG_S^\Gamma = \Gamma \cdot C$.
	\end{enumerate}
\end{notation}

\begin{thm} \label{SuperRes}
Suppose 
\noprelistbreak
	\begin{itemize}
	\item $\GG$ is a solvable algebraic group over a finite extension~$F$ of\/~$\rational$,
	\item $S$ is a finite set of places of~$F$,
	and
	\item $\Gamma$ is a Zariski-dense, $S$-arithmetic subgroup of\/~$\GG$.
	\end{itemize}
If either $S$ contains at least one nonarchimedean place or\/ $[\GG^\circ, \GG^\circ] = \unip \GG$, then\/ $\Gamma$ is $L$-superrigid in\/ $\bigl(\Zar{\Res{F/\rational} \Gamma}\bigr){}_{\Char S}^\Gamma$, for every local field~$L$.
\end{thm}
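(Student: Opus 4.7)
The plan is to reduce to the case $F = \rational$ via restriction of scalars and then treat archimedean and nonarchimedean target fields $L$ by different methods. Set $\HH := \Zar{\Res{F/\rational}\Gamma}$, so $\HH$ is a Zariski-closed $\rational$-subgroup of $\Res{F/\rational}\GG$ in which $\Gamma$ is Zariski-dense by construction, and the target group in the statement becomes $H = \HH_{\Char S}^\Gamma = \Gamma \cdot C$, with $C$ the compact subgroup of $(\HH^\circ)_{\Char S}$ described in \fullcref{GSDefn}{C}. Under this reduction, $\Gamma$ is an $S'$-arithmetic subgroup of the solvable $\rational$-group $\HH$ for the set $S'$ of places of $\rational$ consisting of the primes in $\Char S$ together with the archimedean place, so the problem becomes intrinsically one about $\rational$-groups and rational places.

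For $L$ archimedean I would appeal directly to \cite{Witte-NonarchSuper} (of which \cref{ArchSuperQrank0} is a particularly clean special case): that result already produces a continuous extension of $\alpha$ to a finite-index subgroup of a closed subgroup $\overline{\HH(\ints_{\Char S})} \subseteq \HH_{\Char S}$ that contains $\Gamma$. Since $C$ is compact and $\Gamma \cdot C$ is contained in any closed subgroup that contains $\Gamma$, the extension restricts to a continuous homomorphism on the relevant finite-index subgroup of $H$, with $C$ absorbed into the compact centralizer $K$ permitted by \cref{SuperDefn}.

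For $L$ nonarchimedean I would analyze $\alpha$ via the Zariski closure $\LL := \Zar{\alpha(\Gamma)} \subseteq \GL_n$, which is a solvable $L$-group since $\Gamma$ is solvable. Using the exact sequence $1 \to \unip\LL \to \LL \to \TT \to 1$, I would extend $\alpha$ piece by piece: the restriction to $\Gamma \cap \unip\HH(\rational)$ is a homomorphism of finitely generated nilpotent groups into $\unip\LL(L)$ and extends via Malcev-type arguments to a continuous homomorphism of $(\unip\HH)_{\Char S}$; the induced map on the reductive quotient $\Gamma / (\Gamma \cap \unip\HH(\rational)) \to \TT(L)$ decomposes into characters $\chi: \Gamma \to L^\times$, each of which must be shown to extend continuously to a character of the torus quotient of $H$, modulo finite index and bounded error. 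These partial extensions then glue via the solvable structure of $\LL$ to a continuous homomorphism on a finite-index subgroup of $H$.

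The main obstacle is the character-extension step: abstract characters $\chi: \Gamma \to L^\times$ need not be continuous in any natural topology on $\Gamma$, and one must show that in our setting every such character is ``tame'' in the sense of \cref{SuperDefn}. This is precisely where the disjunctive hypothesis enters. When $S$ contains a nonarchimedean place, the image $\chi(\Gamma)$ either has compact closure in $L^\times$ (and so is absorbed into the compact error subgroup $K$) or factors continuously through the $p$-adic component $\HH(\rational_p)$ of $H$ for the corresponding residue characteristic $p \in \Char S$; either way $\chi$ extends continuously after passing to a finite-index subgroup. When instead $[\GG^\circ, \GG^\circ] = \unip\GG$, the reductive quotient of $\Gamma$ is finite modulo $C$, so every character is automatically bounded and handled by $K$. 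Verifying that these cases exhaust the possibilities, and managing the passage to the finite-index subgroup $\Gamma_0$ and finite normal subgroup $N$ required by \cref{SuperDefn}, will be the bulk of the technical work.
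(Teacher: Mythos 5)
There is a genuine gap, and it sits exactly at the step you flag as ``the main obstacle.'' The paper's proof is short because it only has to verify the hypotheses of \cref{GenSuper} (with $\Zar{\Res{F/\rational}\Gamma}$ and $\Char S$ in the roles of $\GG$ and~$S$): (1) and (2) follow from $S$-arithmeticity, (3) is vacuous since $\MM$ is trivial for solvable $\GG$, and (4) follows from \cref{noZ} when $S$ has a nonarchimedean place, or from Zariski-density of $[\Gamma,\Gamma]$ in $\unip\GG$ when $[\GG^\circ,\GG^\circ]=\unip\GG$. Your sketch instead tries to redo the nonarchimedean analysis directly, and the key assertion is wrong: you claim $\alpha$ restricted to $\Gamma\cap\unip\HH(\rational)$ is a homomorphism \emph{into} $\unip\LL(L)$, but an abstract homomorphism need not respect Jordan decomposition, and $\Zar{\alpha(U_\Gamma)}$ can a priori contain a nontrivial torus (the paper's example $\integer[i,1/(2+i)]\twoheadrightarrow\integer_p\hookrightarrow\rational_p^\times$ is exactly such a map on a unipotent $S$-arithmetic group). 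Controlling that torus component of $\alpha|_{U_\Gamma}$ is precisely where the disjunctive hypothesis enters, via \fullcref{GenSuper}{UnoZ}: no finite-index subgroup of $\Gamma\cap\UU$ has an infinite cyclic quotient modulo commutators, so the image of $U_\Gamma$ in the torus factor must lie in its maximal compact subgroup (see \cref{GenSuperPf-S}). You instead attribute the hypothesis to characters of the \emph{reductive quotient} of $\Gamma$; but those are harmless --- after passing to a finite-index subgroup with $T_\Gamma$ meeting the open normal subgroup trivially, any character of $T_\Gamma$ extends by being declared trivial on that subgroup --- and your specific claim that $[\GG^\circ,\GG^\circ]=\unip\GG$ makes the reductive quotient of $\Gamma$ finite modulo $C$ is false (take $\GG$ a torus with infinite $S$-unit group). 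So the dichotomy you assert for characters is both unproved and aimed at the wrong subgroup.

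There are further unrepaired steps. The group $\Gamma\cap\unip\HH(\rational)$ is generally \emph{not} finitely generated (e.g.\ $\integer[1/p]$), so plain Malcev rigidity does not apply; the paper substitutes the Hirsch-rank argument of \cref{rank>dim} and \cref{SuperRank}. The ``gluing along the solvable structure'' is not automatic: in the paper it is the Levi-subgroup/graph argument of \cref{GenSuperPf-S}, which produces a central unipotent correction $\VV$ and shows the resulting error $\alpha_V(T_\Gamma)$ is bounded. The cases where $L$ has residue characteristic $p\notin\Char S$, or $\Char L>0$, need the separate compactness arguments of \cref{GenSuperPf-notS} and \cref{charp} (this is where \cref{CpctSubsGenCpct} and the eigenvalue argument for $Z$ are used), and your outline does not address them. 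Finally, your archimedean reduction is flawed as stated: $C$ in \fullcref{GSDefn}{C} is not compact (it contains $\UU_S$; only $C/\UU_S$ is compact), and it is not true that $\Gamma\cdot C$ is contained in every closed subgroup containing $\Gamma$, so you cannot simply restrict the extension furnished on $\overline{\HH(\ints_{\Char S})}$; the paper instead quotes the archimedean superrigidity theorem of \cite{Witte-NonarchSuper} in the form that applies to the group actually at hand. The most economical repair of your proposal is to abandon the direct sketch and reduce to \cref{GenSuper} as above.
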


\begin{rem}
If $F = \rational$ (so $\Res{F/\rational} \GG = \GG$) and $\Rrank \GG = 0$, then the subgroup~$C$ in \fullcref{GSDefn}{C} is open, and $\GG_S/\Gamma$ is compact \cite[Thm.~5.5(1), p.~260]{PlatonovRapinchukBook}. This implies that $\GG_S^\Gamma = \Gamma \cdot C$ is a finite-index subgroup of $\GG_S$. Therefore, \cref{SuperRrank0} is a consequence of \cref{SuperRes}.
\end{rem}

We state and prove our main theorem in \cref{GeneralSect}.
By specializing this general result to solvable groups, \cref{SolvSect} obtains \cref{SuperRes} and other similar superrigidity theorems.
A consequence for non-solvable groups is stated in \cref{NonsolvSect}.

\begin{rem}
Our results assume that $\GG$ is defined over a field of characteristic zero, so we have nothing to say about $S$-arithmetic subgroups of solvable groups that are defined over a global field of positive characteristic. That seems to be a much more difficult problem, and we merely point out that the paper \cite{LifschitzWitte} proves a rigidity result (but not superrigidity) in a very special case.
\end{rem}

\section{A nonarchimedean superrigidity theorem} \label{GeneralSect}

We now state and prove our main result. 
Later sections of the paper explain that superrigidity results for various $S$-arithmetic groups are special cases of this theorem.

\begin{notation}
We use $\Zar{X}$ to denote the Zariski closure of a matrix group~$X$. We emphasize that this is the \textbf{Zariski} closure, not the closure in the ordinary topology.
\end{notation}

\begin{defn}
Let $L$ be a local field. A subgroup~$\Gamma$ of a topological group~$G$ is \emph{semisimply $L$-superrigid} in~$G$ if $\Gamma$ is $L$-superrigid in~$G$, and, for every homomorphism $\alpha \colon \Gamma \to \GL_n(L)$:
\noprelistbreak
	\begin{enumerate}
	\item if $\Char L = 0$, then $\Zar{\alpha(\Gamma)}$ is semisimple,
	and
	\item if $\Char L \neq 0$, then $\alpha(\Gamma)$ is contained in a compact subgroup of $\GL_n(L)$.
	\end{enumerate}
\end{defn}

\begin{thm} \label{GenSuper}
 Let
 \noprelistbreak
 	\begin{itemize}
	\item $\GG$ be a connected algebraic group over\/~$\rational$, 
	\item $S$ be a finite set of prime numbers,
	\item $\Gamma$ be a Zariski-dense subgroup of\/~$\GG$,
	and
	\item $L$ be a nonarchimedean local field.
	\end{itemize}
Write 
 \noprelistbreak
	\begin{itemize}
	\item $\GG = \MM  \TT \UU$, where
		\begin{itemize}
		\item $\MM$ is a semisimple\/ $\rational$-group,
		\item $\TT$ is a $\rational$-torus that centralizes\/~$\MM$,
		and
		\item $\UU = \unip \GG$,
		\end{itemize}
	\item $M_\Gamma = \Gamma \cap \MM$, $T_\Gamma = \Gamma \cap \TT$, and $U_\Gamma = \Gamma \cap \UU$, 
	and
	\item $\GG_S^\Gamma = T_\Gamma \, \MM_S K_T \UU_S$, where $K_T$ is the\/ \textup(unique\textup) maximal compact subgroup of\/ $\TT\!_S$.
	\end{itemize}
Assume:
 \noprelistbreak
	\begin{enumerate}
	\item \label{GenSuper-GZ} 
	$\GG(\integer) \almsubset \Gamma \almsubset \GG(\integer_S)$, where $\almsubset$ means ``has a finite-index subgroup that is contained in,''
	\item \label{GenSuper-MTU}
	$M_\Gamma T_\Gamma U_\Gamma$ has finite index in~$\Gamma$,
	\item \label{GenSuper-Msuper}
	$M_\Gamma$ is semisimply\/ $L$-superrigid in $\MM_S$,
	and
	\item \label{GenSuper-UnoZ}
	for every finite-index subgroup\/~$\Gamma'$ of\/~$\Gamma$, the group $(\Gamma' \cap \UU)/ \bigl( [\Gamma',\Gamma'] \cap \UU \bigr)$ has no infinite, cyclic quotient.
	\end{enumerate}
Then $\Gamma$ is $L$-superrigid in $\GG_S^\Gamma$.
 \end{thm}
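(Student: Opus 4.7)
The plan is to extend a given $\alpha \colon \Gamma \to \GL_n(L)$ by handling the three factors in the decomposition $\GG = \MM\TT\UU$ separately and then gluing. Using assumptions~\pref{GenSuper-GZ} and~\pref{GenSuper-MTU}, I would first replace $\Gamma$ by a finite-index subgroup so that $\Gamma = M_\Gamma T_\Gamma U_\Gamma$, making each of $M_\Gamma, T_\Gamma, U_\Gamma$ an $S$-arithmetic lattice in $\MM_S$, $\TT\!_S$, $\UU_S$ respectively. Assumption~\pref{GenSuper-Msuper} then directly provides a continuous extension $\widehat\alpha_M \colon \MM_S \to H_M / N_M$ of $\alpha|_{M_\Gamma}$, modulo a compact centralizing subgroup; when $\Char L \neq 0$, $\alpha(M_\Gamma)$ is even bounded, which simplifies the picture. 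For the toral factor, $T_\Gamma$ is finitely generated abelian and appears in $\GG_S^\Gamma$ as a discrete subgroup, so its ``extension'' is simply $\alpha|_{T_\Gamma}$, while the compact piece $K_T$ contributes only a compact image to be absorbed into the compact subgroup~$K$ of \cref{SuperDefn}.

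The key step is extending $\alpha|_{U_\Gamma}$ continuously to $\UU_S$. Since $U_\Gamma$ is a finitely generated nilpotent group, the Zariski closure $\Zar{\alpha(U_\Gamma)}$ is nilpotent, so up to finite index it splits as a product of a torus and a unipotent group. Assumption~\pref{GenSuper-UnoZ} is precisely what forces the toral part to have bounded image: any unbounded character on $U_\Gamma$ pulled back from an abelian quotient of some finite-index $\Gamma' \leq \Gamma$ would give an infinite cyclic quotient of $(\Gamma' \cap \UU)/([\Gamma',\Gamma'] \cap \UU)$, contradicting the hypothesis. Once $\alpha(U_\Gamma)$ is virtually unipotent, Mal'cev-type rigidity for $S$-arithmetic nilpotent groups should produce a continuous extension $\widehat\alpha_U \colon \UU_S \to \GL_n(L)$, exploiting the $p$-adic analytic structure of $\UU_S$ for each $p \in S$.

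The hard part, which I expect to be the main obstacle, is the gluing: defining $\widehat\alpha$ on $\GG_S^\Gamma$ as a suitable product of the three extensions modulo a single compact error, and verifying that it is a homomorphism on a finite-index open subgroup. This requires checking compatibility of $\widehat\alpha_M$ and $\widehat\alpha_U$ under the $\MM\TT$-conjugation action on $\UU$, consistency of the $T_\Gamma$-piece with the fact that $\TT$ centralizes $\MM$, and consolidation of the various bounded-error terms into one compact subgroup centralizing the continuous part. Zariski density of $\Gamma$ in $\GG$ is essential throughout, since any identity among matrices $\alpha(\gamma)$ corresponding to a relation in $\GG_S$ must propagate by continuity to the ambient group, which is how the three partial extensions combine into a single continuous homomorphism.
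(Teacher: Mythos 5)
Your outline assembles some of the right ingredients (assumption \pref{GenSuper-Msuper} for the $\MM$-part, \cref{SuperRank}-type Mal'cev rigidity for the unipotent part, extending a map on $T_\Gamma$ by triviality on the open normal subgroup $\MM_S K_T \UU_S$), but the step you defer as ``the hard part'' is exactly where the proof lives, and the source-side strategy you propose --- extend $\alpha$ on $M_\Gamma$, $T_\Gamma$, $U_\Gamma$ separately and multiply --- does not go through as stated. The paper does not glue three partial extensions; it decomposes the \emph{target}: after writing $\Zar{\alpha(\Gamma)} = \NN \times \CC \times \HH$ (semisimple, torus, and a factor whose reductive subgroups act faithfully on its unipotent radical), the essential component is handled by taking $\Zar{\graph(\alpha|_{M_\Gamma T_\Gamma})}$, choosing a Levi subgroup $\BB$ over $\rational_p$, proving that the complementary unipotent piece $\VV$ is \emph{central} in $\Zar{\alpha(\Gamma)}$ (this needs that $\BB$ normalizes $\graph(\alpha_U)$, hence that $\alpha_U$ is the unique rational extension, plus the faithfulness reduction), and absorbing the discrepancy $\alpha_V(T_\Gamma)$ into a central compact subgroup $K_V$; only then is $\BB \cdot \graph(\alpha_U)$ a group and the graph of a rational homomorphism $\widehat\alpha \colon \GG \to \Zar{\alpha(\Gamma)}$. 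In your scheme the extension of $\alpha|_{M_\Gamma}$ provided by \pref{GenSuper-Msuper} only agrees with $\alpha$ modulo an error, and a pointwise product of noncommuting partial maps is not a homomorphism; ``identities propagate by Zariski density'' is not a substitute for showing the error terms are central and compact, which is the actual content.

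Two further concrete gaps. First, your use of assumption \pref{GenSuper-UnoZ} to bound the toral part of $\Zar{\alpha(U_\Gamma)}$ is not justified: an unbounded character of $U_\Gamma$ kills $[U_\Gamma,U_\Gamma]$, but the hypothesis concerns $(\Gamma'\cap\UU)/\bigl([\Gamma',\Gamma']\cap\UU\bigr)$, so the character must be trivial on $[\Gamma',\Gamma']\cap\UU$, i.e.\ must come from a homomorphism defined on $\Gamma'$, not merely on $U_\Gamma$. The paper arranges this by splitting the torus $\CC$ off the Zariski closure of $\alpha(\Gamma)$ (so the relevant projection kills $[\Gamma,\Gamma]$) and kills any remaining torus in $\Zar{\alpha(U_\Gamma)}$ by normality-implies-centrality plus the faithfulness reduction, not by \pref{GenSuper-UnoZ}. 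Second, your plan says nothing about $L = \rational_p$ with $p \notin S$, or $\Char L \neq 0$; there an extension of $\alpha|_{U_\Gamma}$ to $\UU_S$ is neither available nor needed, and the argument is entirely different: one shows $\alpha(M_\Gamma)$, $\alpha(U_\Gamma)$, and all of $\alpha(T_\Gamma)$ except a free abelian part $Z$ with eigenvalues powers of~$p$ are bounded, invokes \cref{CpctSubsGenCpct} to see the compact pieces generate a relatively compact subgroup, and uses the eigenvalue structure of $Z$ to see it centralizes that compact group. Without these three items your proposal is a plausible plan, not a proof.
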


Before proving \cref{GenSuper}, we record a few observations, mostly about unipotent groups.
First of all, note that every subgroup of a unipotent group is nilpotent, and therefore has a well-defined (Hirsch) rank, which is the supremum of the ranks of its finitely generated subgroups \cite[Defn.~2.9, p.~32]{RaghunathanBook}.

\begin{lem}[cf.\ {\cite[Thm.~2.10, p.~32]{RaghunathanBook}}]  \label{rank>dim}
If\/ $\Gamma$ is any subgroup of a unipotent algebraic group\/~$\UU$ \textup(over a field of characteristic zero\textup), then
$\rank \Gamma \ge \dim \Zar\Gamma$.
\end{lem}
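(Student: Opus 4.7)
The plan is to reduce to the finitely generated case and then induct on $d = \dim \Zar{\Gamma}$. First, Noetherianity of the Zariski topology yields a finitely generated subgroup $\Gamma_0 \subseteq \Gamma$ with $\Zar{\Gamma_0} = \Zar{\Gamma}$: pick finitely many elements of $\Gamma$ whose Zariski closure already has the same dimension and irreducible components as $\Zar{\Gamma}$. Since $\rank \Gamma \ge \rank \Gamma_0$, it suffices to treat $\Gamma_0$; so I may assume $\Gamma$ is finitely generated, hence polycyclic with a well-defined Hirsch rank that is additive on short exact sequences.

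For the abelian case of the induction (which covers the base $d=0$), if $\HH := \Zar{\Gamma}$ is abelian then, being a connected unipotent group in characteristic zero, $\HH \iso \mathbb{G}_a^d$, and $\Gamma$ sits inside the additive group of $F^d$ (with $F$ the base field) as a torsion-free finitely generated abelian group $\integer^r$, where $r = \rank \Gamma$. Its Zariski closure in $\mathbb{A}^d$ is its $F$-linear span, which has dimension at most $r$, so $d \le r$.

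For the inductive step, suppose $\HH$ is nonabelian and set $\NN = [\HH, \HH]$, a proper, nontrivial, connected unipotent subgroup of $\HH$. The crux is to show $[\Gamma, \Gamma]$ is Zariski dense in $\NN$. To this end, let $\LL = \Zar{[\Gamma, \Gamma]}$: since $\Gamma$ normalizes $[\Gamma, \Gamma]$ and the normalizer of $\LL$ in $\HH$ is Zariski closed, $\LL$ is normal in $\HH$; the image of $\Gamma$ in $\HH/\LL$ is abelian and Zariski dense, so $\HH/\LL$ is abelian, forcing $\LL \supseteq \NN$, and the reverse inclusion is clear. Hence $\Zar{\Gamma \cap \NN} = \NN$, so the inductive hypothesis applied to $\Gamma \cap \NN \subseteq \UU$ gives $\rank(\Gamma \cap \NN) \ge \dim \NN$, while the abelian case applied to the Zariski-dense image of $\Gamma$ in $\HH/\NN$ gives $\rank(\Gamma/(\Gamma \cap \NN)) \ge \dim(\HH/\NN)$. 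Additivity of Hirsch rank on
\[
1 \to \Gamma \cap \NN \to \Gamma \to \Gamma/(\Gamma \cap \NN) \to 1
\]
then yields $\rank \Gamma \ge \dim \NN + \dim(\HH/\NN) = d$.

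The step I expect to be the main obstacle is the Zariski density of $[\Gamma, \Gamma]$ in $[\HH, \HH]$; once this is in hand, the remainder is a routine induction on dimension.
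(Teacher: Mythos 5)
Your proposal is correct and takes essentially the same route as the paper: induction on $\dim \Zar{\Gamma}$ via the commutator subgroup, the abelian case as the base, and additivity of Hirsch rank on the resulting exact sequence. The only differences are that you spell out details the paper leaves implicit --- the reduction to a finitely generated subgroup and the Zariski density of $[\Gamma,\Gamma]$ in $[\Zar{\Gamma},\Zar{\Gamma}]$ (which the paper simply asserts in the form ``$\Gamma \cap [\UU,\UU]$ is Zariski-dense in $[\UU,\UU]$'') --- and your argument for that density step is sound.
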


\begin{proof}
Let $\pi \colon \UU \to \UU/[\UU,\UU]$ be the natural homomorphism. There is no harm in assuming $\UU = \Zar\Gamma$. Then $\Gamma \cap [\UU,\UU]$ is Zariski-dense in $[\UU,\UU]$, so, by induction on $\dim \UU$, we know that 
	$\rank \bigl( \Gamma \cap [\UU,\UU] \bigr) \ge \dim [\UU,\UU]$.
Also, $\pi(\Gamma)$ is Zariski-dense in the abelian unipotent group $\pi(\UU)$, so it is obvious that $\rank \pi(\Gamma) \ge \dim \pi(\UU)$. Therefore
	\begin{align*} \rank \Gamma 
		&= \rank \bigl(\Gamma \cap \ker\pi \bigr) + \rank \pi(\Gamma)
		=  \rank \bigl(\Gamma \cap [\UU,\UU]  \bigr) + \rank \pi(\Gamma)
		\\& \ge \dim  [\UU,\UU] + \dim \bigl( \UU/[\UU,\UU] \bigr)
		= \dim \UU 
		= \dim \Zar\Gamma 
		. \qedhere \end{align*}
\end{proof}

\begin{cor}[cf.\ {\cite[Thm.~2.11, p.~33]{RaghunathanBook}}] \label{SuperRank}
If 
	\begin{itemize}
	\item $\UU$ is a unipotent algebraic group over~$\rational_p$, for some prime~$p$,
	\item $\Gamma$ is a subgroup of\/ $\UU(\rational_p)$,
	\item $\rank \Gamma = \dim \Zar\Gamma$, 
	and
	\item $\alpha \colon \Gamma \to \GL_n(\rational_p)$ is a finite-dimensional representation of\/~$\Gamma$, such that $\alpha(\Gamma)$ is unipotent, 
	\end{itemize}
then $\alpha$ extends uniquely to a rational representation $\widehat\alpha \colon \Zar\Gamma \to  \GGL_n$. \textup(Furthermore, $\widehat\alpha$ is defined over~$\rational_p$.\textup)
\end{cor}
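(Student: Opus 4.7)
The plan is to reduce to a finitely generated Zariski-dense subgroup, extend $\alpha$ there by a Malcev-type argument, and then use uniqueness to propagate the extension to all of~$\Gamma$.

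First I would replace $\UU$ by $\Zar\Gamma$, so that $\Gamma$ is Zariski-dense in~$\UU$ and $\rank \Gamma = \dim \UU =: d$. Since $\rank\Gamma$ is the supremum over finitely generated subgroups, I would choose a finitely generated subgroup $\Gamma_0 \subseteq \Gamma$ with $\rank \Gamma_0 = d$. Applying \cref{rank>dim} to $\Zar{\Gamma_0} \subseteq \UU$ gives $\dim \Zar{\Gamma_0} \le d = \dim \UU$; since unipotent $\rational_p$-groups in characteristic zero are connected (hence irreducible), this forces $\Zar{\Gamma_0} = \UU$. Note also that $\Gamma_0$ is torsion-free, because any unipotent matrix in characteristic zero of finite order is trivial.

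Next, I would let $\VV = \Zar{\alpha(\Gamma)} \subseteq \GGL_n$. Since $\alpha(\Gamma)$ consists of unipotent matrices, $\VV$ is a unipotent $\rational_p$-group. I would then apply the standard extension theorem for finitely generated torsion-free nilpotent groups (via the exp/log correspondence and the Baker--Campbell--Hausdorff formula, cf.~\cite[Thm.~2.11, p.~33]{RaghunathanBook}): since $\Gamma_0$ is a Zariski-dense subgroup of the $d$-dimensional unipotent $\rational_p$-group~$\UU$ with $\rank \Gamma_0 = d$, the $\rational_p$-Malcev hull of $\Gamma_0$ is canonically identified with~$\UU$, and hence $\alpha|_{\Gamma_0} \colon \Gamma_0 \to \VV(\rational_p)$ extends uniquely to a $\rational_p$-rational homomorphism $\widehat\alpha \colon \UU \to \VV \subseteq \GGL_n$.

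Finally, I would upgrade this to a statement about all of~$\Gamma$: for any $\gamma \in \Gamma$, form $\Gamma_0' = \langle \Gamma_0, \gamma \rangle$, which is still finitely generated, still Zariski-dense in~$\UU$, and satisfies $d \le \rank \Gamma_0' \le \rank \Gamma = d$. Repeating the construction with $\Gamma_0'$ in place of~$\Gamma_0$ yields an extension $\widehat\alpha'$ of $\alpha|_{\Gamma_0'}$; by the uniqueness of extensions from~$\Gamma_0$, we have $\widehat\alpha' = \widehat\alpha$ as rational maps on~$\UU$, so $\widehat\alpha(\gamma) = \widehat\alpha'(\gamma) = \alpha(\gamma)$. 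The global uniqueness of $\widehat\alpha$ then follows from Zariski density of~$\Gamma$ in~$\UU$. The main thing to pin down carefully is the $\rational_p$-rational version of the Malcev extension theorem, which is classical but requires some bookkeeping to ensure rationality over the nonarchimedean field rather than over~$\real$.
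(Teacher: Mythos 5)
There is a genuine gap at the first step of your reduction. From \cref{rank>dim} applied to $\Gamma_0$ you only get the upper bound $\dim \Zar{\Gamma_0} \le \rank \Gamma_0 = d$; irreducibility of $\UU$ would let you conclude $\Zar{\Gamma_0} = \UU$ only from the \emph{lower} bound $\dim \Zar{\Gamma_0} \ge d$, which you have not established, so ``this forces $\Zar{\Gamma_0} = \UU$'' is a non sequitur. A finitely generated subgroup of full Hirsch rank is not obviously Zariski-dense: a priori it could have rank $d$ while sitting inside a proper closed subgroup of dimension $< d$. (Under the hypothesis $\rank\Gamma = \dim\Zar\Gamma$ this in fact cannot happen, but proving it takes an argument, e.g.\ that finitely generated nilpotent groups of equal Hirsch rank have finite index in one another, combined with unique divisibility of unipotent groups in characteristic zero.) The cheap repair is to choose $\Gamma_0$ more carefully: some finitely generated $\Delta \le \Gamma$ is already Zariski-dense in $\UU$ (maximize $\dim\Zar{\Delta}$ over finitely generated subgroups; since closed subgroups of $\UU$ are connected in characteristic zero, adjoining any $\gamma \in \Gamma \smallsetminus \Zar{\Delta}$ strictly increases the dimension), and then $\Gamma_0 = \langle \Gamma_1, \Delta \rangle$, with $\Gamma_1$ finitely generated of rank~$d$, is finitely generated, Zariski-dense, and has rank exactly $d$ because $d \le \rank\Gamma_0 \le \rank\Gamma = d$.

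Beyond that, your argument delegates the heart of the corollary to ``the $\rational_p$-rational version of the Malcev extension theorem,'' namely that the inclusion $\Gamma_0 \hookrightarrow \UU(\rational_p)$ identifies the $\rational_p$-Malcev hull of $\Gamma_0$ with $\UU$ (this is where $\rank\Gamma_0 = \dim\UU$ enters) and that homomorphisms into unipotent $\rational_p$-groups extend $\rational_p$-rationally. That statement is true, but it is essentially the assertion to be proved: Raghunathan's Theorem~2.11 is stated for lattices in real unipotent groups, and the ``bookkeeping'' you defer is the actual proof. The paper instead gives a short self-contained argument that works directly for the possibly non-finitely-generated group $\Gamma$: take the Zariski closure of $\graph(\alpha)$ in $\UU \times \Zar{\alpha(\Gamma)}$; by \cref{rank>dim} its dimension is at most $\rank\Gamma = \dim\UU$, while its projection to $\UU$ is surjective by Zariski density, so the kernel of the projection is $0$-dimensional, hence trivial (unipotent groups have no nontrivial finite subgroups), and $\Zar{\graph(\alpha)}$ is the graph of the desired rational homomorphism. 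With that route, no reduction to finitely generated subgroups and no propagation step are needed. Your propagation step and the uniqueness/rationality remarks are fine as stated once the two points above are supplied.
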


\begin{proof}
The uniqueness of the extension~$\widehat\alpha$ is immediate from the Zariski density of~$\Gamma$ in~$\Zar{\Gamma}$. Also, $\widehat\alpha$ must be defined over~$\rational_p$, since it maps the Zariski-dense set~$\Gamma$ of $\rational_p$-points of $\Zar{\Gamma}$ into the $\rational_p$-points of $\GGL_n$.  Therefore, we need only prove the existence of~$\widehat\alpha$.

There is no harm in assuming $\UU = \Zar\Gamma$. Let 
	$$ \graph(\alpha) = \bigl\{\, \bigl( \gamma, \alpha(\gamma) \bigr) \mid \gamma \in \Gamma \,\bigr\} \subseteq \UU \times \Zar{\alpha(\Gamma)} ,$$
and
	$$ \text{$\pi \colon \Zar{\graph(\alpha)} \to \UU$ be the composition $\Zar{\graph(\alpha)} \hookrightarrow \UU \times \Zar{\alpha(\Gamma)} \to \UU$.} $$
Then 
	$\pi \bigl( \graph(\alpha) \bigr) = \Gamma$
is Zariski-dense in~$\UU$, so $\pi \raise2pt\hbox{$\Bigl($} \Zar{\graph(\alpha)} \raise2pt\hbox{$\Bigr)$} = \UU$. 
On the other hand, we have $\graph(\alpha) \iso \Gamma$, so, from \cref{rank>dim}, we know 
	$$\dim \Zar{\graph(\alpha)} \le \rank \graph(\alpha) = \rank\Gamma = \dim \Zar\Gamma = \dim \UU .$$
Therefore $\dim \ker\pi = 0$.
Since the unipotent group $\Zar{\graph(\alpha)}$ has no nontrivial subgroups that are $0$-dimensional (in other words, finite), this implies that $\pi$ is an isomorphism of algebraic groups. Therefore, $\Zar{\graph(\alpha)}$ is the graph of a rational homomorphism $\widehat\alpha \colon \UU \to \Zar{\alpha(\Gamma)}$. Namely, $\widehat\alpha$ is the composition
	\begin{align*}
	\UU \stackrel{\pi^{-1}}{\longrightarrow} \Zar{\graph(\alpha)} \hookrightarrow \UU \times \Zar{\alpha(\Gamma)} \longrightarrow \Zar{\alpha(\Gamma)}
	. &\qedhere \end{align*}
\end{proof}

\begin{lem} \label{CpctSubsGenCpct}
Let 
\noprelistbreak
	\begin{itemize}
	\item $p$ be a prime number,
	\item $\GG$ be a connected algebraic group defined over\/~$\rational_p$,
	\item $\GG = \MM \RR$, where\/ $\MM$ is a semisimple group defined over\/~$\rational_p$, and\/ $\RR$~is the solvable radical,
	\item $K$ be a compact subgroup of\/ $\MM(\rational_p)$,
	and
	\item $K_1,\ldots,K_n$ be compact subgroups of\/ $\RR(\rational_p)$.
	\end{itemize}
Then the $p$-adic closure of\/ $\langle K, K_1,\ldots,K_n \rangle$ is compact. 
\end{lem}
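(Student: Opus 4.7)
The plan is to reduce the problem to a statement inside the solvable radical, and then proceed by induction on its derived length.

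First, write $\Lambda = \langle K, K_1, \ldots, K_n \rangle$ and let $\overline\Lambda$ denote its $p$-adic closure. Using the quotient map $\pi \colon \GG(\rational_p) \to (\GG/\RR)(\rational_p)$, observe that $\pi(K_i) = 1$ for each $i$, so $\pi(\Lambda) = \pi(K)$, which is compact (hence closed in the Hausdorff group $(\GG/\RR)(\rational_p)$). By continuity, $\pi(\overline\Lambda) \subseteq \pi(K)$, giving $\overline\Lambda \subseteq K\cdot\RR(\rational_p)$. Setting $N := \overline\Lambda\cap\RR(\rational_p)$, we have $\overline\Lambda/N \hookrightarrow \pi(K)$, and $\overline\Lambda \subseteq K\cdot N$; thus compactness of $\overline\Lambda$ is equivalent to compactness of~$N$.

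Second, the subgroup $N$ is closed in $\RR(\rational_p)$, normalized by $K$ (since $K\subseteq\overline\Lambda$ and $\RR$ is normal in~$\GG$), and contains each $K_i$. I would show $N$ is compact by induction on the derived length of~$\RR$. Over $\rational_p$ in characteristic zero, each abelian quotient $\RR^{(i)}/\RR^{(i+1)}$ of the derived series is a commutative affine $\rational_p$-algebraic group, hence a product of a torus with a vector group. In such a group every compact subset is contained in a compact subgroup (the unique maximal compact subgroup of the torus factor, times a sufficiently large $\integer_p$-lattice in the vector group), so the subgroup generated by any compact subset is itself compact. Applied at each layer to the image of $N$ --- topologically generated by the images of the $K_i$ together with their $K$-conjugates, which form a compact set being the continuous image of $K\times K_i\times K$ under the commutator-conjugation map --- this yields compactness at every abelian layer. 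Stitching the layers together along the short exact sequences of the derived filtration, and using that an extension of a compact group by a compact group is compact, completes the argument.

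The principal technical obstacle lies in making the induction close: at each layer one must confirm that the image of $N$ in $\RR^{(i)}(\rational_p)/\RR^{(i+1)}(\rational_p)$ really is topologically generated by compact sets arising from the $K_i$ and their $K$-conjugates, and not by extra elements whose boundedness is unclear a~priori. This reduces to the observation that conjugation by $K$ acts on $\RR$ by continuous $\rational_p$-algebraic automorphisms, hence sends compact subgroups to compact subgroups, so the compact set $K\cdot K_i\cdot K^{-1}$ controls the image of $N$ uniformly at every layer. Once this uniform boundedness is in hand, the abelian case supplies compactness on each layer, and extending the compactness across the derived filtration is routine.
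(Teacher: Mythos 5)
The reduction in your first step is fine: $\overline\Lambda=K\cdot N$ with $N=\overline\Lambda\cap\RR(\rational_p)$ closed, normalized by~$K$, and containing the~$K_i$, so it suffices to bound~$N$. But the engine of your induction is a false statement. In a commutative affine $\rational_p$-group it is \emph{not} true that every compact subset lies in a compact subgroup, nor that a compact subset generates a relatively compact subgroup: already in $\GL_1$, the one-element compact set $\{p\}\subseteq\rational_p^\times$ generates an unbounded discrete subgroup, and the maximal compact subgroup $\integer_p^\times$ contains every compact \emph{subgroup} of $\rational_p^\times$ but not every compact subset. At the top layer your generating sets happen to be images of the compact groups $kK_ik^{-1}$, so the conclusion there can be salvaged (compact subgroups of the torus factor do lie in its maximal compact, and bounded subsets of the vector factor generate bounded subgroups), but the justification you give is the wrong one, and it is the same false principle that you lean on further down.

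The more serious gap is that the induction does not close, and the remedy you propose does not address the actual difficulty. Beyond the first layer, $N\cap\RR^{(i)}(\rational_p)$ is not topologically generated by images of the $K_i$ and their $K$-conjugates: it is generated by iterated commutators and conjugates involving arbitrary elements of~$N$, whose boundedness is precisely what is to be proved, and the observation that conjugation by the compact group~$K$ sends compact subgroups to compact subgroups says nothing about conjugation by those a priori unbounded elements. The missing ingredient is exactly the nontrivial fact that every compactly generated subgroup of a unipotent $p$-adic group is bounded \cite[Prop.~2.6.3, p.~46]{Abels-FinPres}; note that this uses nilpotency essentially, since the analogous statement fails as soon as a torus is present (again $\langle p\rangle\subseteq\rational_p^\times$). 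The paper's proof uses this result directly and thereby avoids any layer-by-layer induction: write $\RR=\TT\ltimes\UU$ with $\TT$ a torus centralizing~$\MM$, let $C$ be the maximal compact subgroup of $\TT(\rational_p)$, observe that maximality of~$C$ forces $K_i\subseteq C\cdot\UU(\rational_p)$, and set $U_i=(KCK_iCK)\cap\UU(\rational_p)$, a compact subset of $\UU(\rational_p)$ normalized by $K$ and~$C$ with $K_i\subseteq CU_i$; then Abels' proposition shows the closure of $KC\,\langle U_1,\ldots,U_n\rangle$ is a compact subgroup containing $\langle K,K_1,\ldots,K_n\rangle$. To complete your argument you must either quote that proposition (after which the derived-series induction is unnecessary) or prove it, which is where the real work lies.
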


\begin{proof}
Write $\RR = \TT \ltimes \UU$, where $\TT$ is a torus that centralizes~$\MM$, and $\UU$ is unipotent, and let $C$ be the unique maximal compact subgroup of the abelian group $\TT(\rational_p)$. For each~$i$, let 
	$$U_i = (KC K_i CK) \cap \UU(\rational_p) ,$$
so $U_i$~is a compact subset of $\UU(\rational_p)$ that is normalized by both $K$ and~$C$. The maximality of~$C$ implies that $K_i \subseteq C \cdot \UU(\rational_p)$, so $K_i \subseteq C U_i$. Then, since every compactly generated subgroup of $\UU(\rational_p)$ is bounded \cite[Prop.~2.6.3, p.~46]{Abels-FinPres}, we conclude that the closure of
	$K C \cdot \langle U_1,\ldots,U_n \rangle$ is a compact subgroup of~$\GG(\rational_p)$ that contains $\langle K, K_1,\ldots,K_n \rangle$.
\end{proof}

We now prove the main theorem:

\begin{proof}[\bf Proof of \cref{GenSuper}]
By passing to a (torsion-free) subgroup of finite index in~$\Gamma$, we may assume $T_\Gamma \cap \MM_S K_T \UU_S = \{e\}$ (and also that $\Gamma = M_\Gamma T_\Gamma U_\Gamma$).
Let us assume $\Char L = 0$. (See \cref{charp} for the case where $\Char L \neq 0$.) Then, since $L$ is nonarchimedean, we may assume $L = \rational_p$, for some prime number~$p$, so we are given a homomorphism $\alpha \colon \Gamma \to \GL_n(\rational_p)$. We consider two cases.

\setcounter{case}{0}

\begin{case} \label{GenSuperPf-S}
Assume $p \in S$.
\end{case}
Write $\Zar{\alpha(\Gamma)} = \NN \times \CC \times \HH$ (after modding out a finite subgroup), where $\NN$ is semisimple, $\CC$ is a torus, and every reductive subgroup of~$\HH$ acts faithfully by conjugation on~$\unip \HH$. Then $\alpha$~can be decomposed into a homomorphism~$\alpha_N$ into~$\NN$,  a homomorphism~$\alpha_C$ into~$\CC$, and a homomorphism~$\alpha_H$ into~$\HH$. We consider these three components of~$\alpha$ separately.

Since $\Zar{\alpha_N( T_\Gamma U_\Gamma )}$ is a solvable, normal subgroup of the semisimple group~$\NN$, we know that it is finite. By modding it out, we may assume $\alpha_N$ is trivial on $T_\Gamma U_\Gamma$. Assumption~\fullref{GenSuper}{Msuper} provides a continuous homomorphism $\widehat\alpha_N \colon \MM_S \to \NN(\rational_p)$ whose restriction to~$M_\Gamma$ agrees with $\alpha_N |_{M_\Gamma}$ up to a bounded error. By modding out a finite subgroup of $\Zar{\alpha(\Gamma)}$, we may assume $\widehat\alpha_N$ is trivial on $\MM_S \cap \TT\!_S$. Then $\widehat\alpha_N$ can be extended to a continuous homomorphism $\widetilde\alpha_N \colon \GG_S \to \NN(\rational_p)$, by specifying that the extension is trivial on $\TT\!_S  \UU_S$. 
Then $\widetilde\alpha_N$ agrees with $\alpha_N$ up to a bounded error, so this deals with the part of~$\alpha$ that maps into~$\NN$. We may therefore assume, henceforth, that $\NN$ is trivial.

We can extend $\alpha_{C}|_{T_\Gamma}$ to a continuous homomorphism $\widehat\alpha_{C} \colon T_\Gamma \MM_S K_T \UU_S \to \CC(\rational_p)$, by specifying that the extension is trivial on the open, normal subgroup $\MM_S K_T \UU_S$. Now, let $K_C$ be the maximal compact subgroup of $\CC(\rational_p)$, so $\CC(\rational_p)/K_C$ is a finitely generated, torsion-free, abelian group. Then Assumption~\fullref{GenSuper}{UnoZ} implies that the image of $\alpha_C(U_\Gamma)$ in $\CC(\rational_p)/K_C$ must be trivial, which means $\alpha_C(U_\Gamma) \subseteq K_C$. Also, since $\CC$ is abelian, Assumption~\fullref{GenSuper}{Msuper} implies that $\alpha_C(M_\Gamma)$ is trivial (after passing to a finite-index subgroup). Then, for all $m \in M_\Gamma$, $t \in T_\Gamma$, and $u \in U_\Gamma$, we have
	$$\alpha_C(mtu) 
	= \alpha_C(m) \cdot  \alpha_C(t) \cdot \alpha_C(u) 
	\in e \cdot \widehat\alpha_C(mtu) \cdot K_C
	= \widehat\alpha_{C}(mtu) \,  K_C ,$$
so this deals with the part of~$\alpha$ that maps into~$\CC$. We may therefore assume, henceforth, that $\CC$ is trivial. 

We are now assuming that $\NN$ and~$\CC$ are trivial, so $\Zar{\alpha(\Gamma)} =  \HH$, which means that 
	\begin{align} \label{nocentraltori}
	\text{every reductive subgroup of $\Zar{\alpha(\Gamma)}$ acts faithfully on $\unip \Zar{\alpha(\Gamma)}$.}
	\end{align}

We know that $\Zar{\alpha(U_\Gamma)}$ is nilpotent (since $U_\Gamma$ is nilpotent), so it has a unique maximal torus~$\RR$. We also know that $\Zar{\alpha(U_\Gamma)}$ is a normal subgroup of~$\Zar{\alpha(\Gamma)}$
(since $U_\Gamma$ is a normal subgroup of~$\Gamma$). Therefore $\RR$ is a normal subgroup of $\Zar{\alpha(\Gamma)}$. Since any normal torus in a connected algebraic group is central, we conclude that $\RR$ is contained in the center of $\Zar{\alpha(\Gamma)}$. Then \pref{nocentraltori} implies that $\RR$ is trivial. This means that $\Zar{\alpha(U_\Gamma)}$ is unipotent.
Also, since $\UU$ is a unipotent $\rational$-group, and Assumption~\fullref{GenSuper}{GZ} tells us that $\UU(\integer) \almsubset U_\Gamma \almsubset \UU(\integer_S)$, we know that $U_\Gamma$ is Zariski-dense in~$\UU$ and $\rank U_\Gamma = \dim \UU$.
(To establish the equality, note that if $\Gamma^+$ is any finitely generated subgroup of $\UU(\rational)$ that contains $\UU(\integer)$, then the proof of \cite[Thm.~2.10, p.~32]{RaghunathanBook} shows that $\Gamma^+$ is a lattice in $\UU(\real)$, so \cite[Thm.~2.10, p.~32]{RaghunathanBook} implies $\rank\Gamma^+ = \dim \UU$.) Therefore, \cref{SuperRank} tells us that $\alpha|_{U_\Gamma}$ extends to a (unique) rational homomorphism $\alpha_U \colon \UU \to  \Zar{\alpha(U_\Gamma)}$. (Furthermore,  $\alpha_U$~is defined over~$\rational_p$.)

Let:
\noprelistbreak
	\begin{itemize}
	\item  $\graph(\alpha|_{M_\Gamma T_\Gamma}) = \bigl\{\, \bigl( \gamma, \alpha(\gamma) \bigr) \mid \gamma \in M_\Gamma T_\Gamma \,\bigr\}
	\subseteq M_\Gamma T_\Gamma\times \alpha(M_\Gamma T_\Gamma) 
	\subseteq  \bigl( \MM(\rational) \TT(\rational)  \bigr)  \times \GL_n(\rational_p)$, 
	\item $\Zar{\graph(\alpha|_{M_\Gamma T_\Gamma})}$ be the Zariski closure of $\graph(\alpha|_{M_\Gamma T_\Gamma})$ in $\MM\TT \times \Zar{\alpha(M_\Gamma T_\Gamma)}$,
	\item $\BB$ be a (reductive) Levi subgroup of $\Zar{\graph(\alpha|_{M_\Gamma T_\Gamma})}$ that is defined over~$\rational_p$,
	and
	\item $ \VV = \bigset{ v \in \Zar{\alpha(M_\Gamma T_\Gamma)} }{ (e , v) \in \Zar{\graph(\alpha|_{M_\Gamma T_\Gamma})} }$.
	\end{itemize}
By passing to a finite-index subgroup of~$\Gamma$, we may assume $\Zar{\graph(\alpha|_{M_\Gamma T_\Gamma})}$ is connected.

\begin{claim}[$\{e\} \times \VV$ is the unipotent radical of\/ $\Zar{\graph(\alpha|_{M_\Gamma T_\Gamma})}$, and $\VV$ is in the center of\/ $\Zar{\alpha(\Gamma)}$, so
	$$ \Zar{\graph(\alpha|_{M_\Gamma T_\Gamma})} = \BB \cdot \bigl( \{e\} \times \VV \bigr) \iso \BB \times \VV .$$]%
To verify this, first note that, since the reductive group $\MM\TT$ has no nontrivial normal unipotent subgroups, the unipotent radical of $\Zar{\graph(\alpha|_{M_\Gamma T_\Gamma})}$ must be contained in the kernel of the natural projection from $\Zar{\graph(\alpha|_{M_\Gamma T_\Gamma})}$ to~$\MM\TT$. This kernel is $\{e\} \times \VV$. 
We now prove the reverse inclusion. Since $M_\Gamma T_\Gamma$ normalizes $U_\Gamma$, we know that $\graph(\alpha|_{M_\Gamma T_\Gamma})$ normalizes $\graph(\alpha|_{U_\Gamma})$. Then the uniqueness of~$\alpha_U$ implies that $\graph(\alpha|_{M_\Gamma T_\Gamma})$ also normalizes $\graph(\alpha_U)$. Since $\graph(\alpha_U)$ is Zariski closed (because the homomorphism $\alpha_U$ is rational), we conclude that
$\Zar{\graph(\alpha|_{M_\Gamma T_\Gamma})}$ normalizes $\graph(\alpha_U)$.
So $\{e\} \times \VV$ normalizes $\graph(\alpha_U)$.
Hence, for any $v \in \VV$ and $u \in U_\Gamma$, we have
	$$ \bigl( u, v^{-1} \, \alpha_U(u) \, v\bigr)
	= (e,v)^{-1} \bigl( u, \alpha_U(u) \bigr) (e,v) 
	\in \graph(\alpha_U) ,$$
so $v^{-1} \, \alpha_U(u) \, v = \alpha_U(u)$.
This implies that 
	$$ \text{$\VV$ centralizes $\Zar{\alpha(U_\Gamma)}$} .$$
Also, since $T_\Gamma$ is central in $M_\Gamma T_\Gamma$, and Assumption~\fullref{GenSuper}{Msuper} tells us that $\Zar{\alpha(M_\Gamma)}$ has no unipotent radical, we know that $\unip \Zar{\alpha(M_\Gamma T_\Gamma)}$ is central in $\Zar{\alpha(M_\Gamma T_\Gamma)}$. Therefore, $\VV$ centralizes 
	$$\Zar{\alpha(U_\Gamma)} \cdot \unip \Zar{\alpha(M_\Gamma T_\Gamma)} = \unip \Zar{\alpha(\Gamma)},$$
so \pref{nocentraltori} tells us that $\VV$ is unipotent. Since $\VV$ is normal, this completes the proof that $\{e\} \times \VV$ is the unipotent radical of $\Zar{\graph(\alpha|_{M_\Gamma T_\Gamma})}$. In addition, $\VV$~is central in $\Zar{\alpha(\Gamma)}$, because it centralizes both $\Zar{\alpha(U_\Gamma)}$ and $\Zar{\alpha(M_\Gamma T_\Gamma)}$.  (The latter is because $\VV \subseteq \unip \Zar{\alpha(M_\Gamma T_\Gamma)}$.) This completes the proof of the claim.
\end{claim}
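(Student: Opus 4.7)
The plan is to establish three things simultaneously: that $\VV$ is unipotent, that it is central in $\Zar{\alpha(\Gamma)}$, and that $\{e\} \times \VV$ coincides with the unipotent radical of $\Zar{\graph(\alpha|_{M_\Gamma T_\Gamma})}$. The easy containment in the last assertion is immediate: since $\MM\TT$ is reductive, the unipotent radical of $\Zar{\graph(\alpha|_{M_\Gamma T_\Gamma})}$ must lie in the kernel of the natural projection to $\MM\TT$, and this kernel is precisely $\{e\} \times \VV$ by definition. So the work is to show that every element of $\VV$ is unipotent and central in $\Zar{\alpha(\Gamma)}$.

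My approach for unipotence would be to leverage the uniqueness of the rational extension $\alpha_U \colon \UU \to \Zar{\alpha(U_\Gamma)}$ supplied by \cref{SuperRank}. Conjugation by $M_\Gamma T_\Gamma$ preserves $U_\Gamma$, so $\graph(\alpha|_{M_\Gamma T_\Gamma})$ normalizes $\graph(\alpha|_{U_\Gamma})$; uniqueness pins down $\alpha_U$, so this upgrades to the statement that $\Zar{\graph(\alpha|_{M_\Gamma T_\Gamma})}$ normalizes the Zariski-closed set $\graph(\alpha_U)$. Conjugating by an element $(e,v) \in \{e\} \times \VV$ and reading off the second coordinate shows that each $v \in \VV$ commutes with $\alpha_U(u)$ for all $u \in \UU$, so $\VV$ centralizes $\Zar{\alpha(U_\Gamma)}$.

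On the $M_\Gamma T_\Gamma$ side, Assumption~\fullref{GenSuper}{Msuper} implies $\Zar{\alpha(M_\Gamma)}$ is semisimple; since $T_\Gamma$ is central in $M_\Gamma T_\Gamma$, the unipotent radical $\unip \Zar{\alpha(M_\Gamma T_\Gamma)}$ must be central in $\Zar{\alpha(M_\Gamma T_\Gamma)}$ and is therefore certainly centralized by $\VV \subseteq \Zar{\alpha(M_\Gamma T_\Gamma)}$. Combining the two centralizer statements gives that $\VV$ centralizes $\Zar{\alpha(U_\Gamma)} \cdot \unip \Zar{\alpha(M_\Gamma T_\Gamma)} = \unip \Zar{\alpha(\Gamma)}$. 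Any reductive subgroup of $\VV$ would then be a normal reductive subgroup of $\Zar{\alpha(\Gamma)}$ centralizing $\unip \Zar{\alpha(\Gamma)}$, contradicting~\pref{nocentraltori}; hence $\VV$ is unipotent. Centrality of $\VV$ in $\Zar{\alpha(\Gamma)}$ falls out of the same two computations, and the product decomposition $\Zar{\graph(\alpha|_{M_\Gamma T_\Gamma})} = \BB \cdot (\{e\} \times \VV) \iso \BB \times \VV$ is then immediate from the Levi decomposition together with centrality of $\VV$.

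The main obstacle I anticipate is the normalizing step: transferring the obvious normalization of $\graph(\alpha|_{U_\Gamma})$ by $\graph(\alpha|_{M_\Gamma T_\Gamma})$ into the stronger statement that $\graph(\alpha_U)$ is normalized by the full Zariski closure $\Zar{\graph(\alpha|_{M_\Gamma T_\Gamma})}$. This relies crucially on $\alpha_U$ being a \emph{rational} extension, so that $\graph(\alpha_U)$ is Zariski closed, together with the uniqueness clause of \cref{SuperRank}. Once this normalization is in hand, the remaining arguments are a routine application of the structure theory of reductive and unipotent groups combined with the semisimplicity granted by Assumption~\fullref{GenSuper}{Msuper}.
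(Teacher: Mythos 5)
Your proposal is correct and follows essentially the same route as the paper: the easy containment via reductivity of $\MM\TT$, the upgrade from normalizing $\graph(\alpha|_{U_\Gamma})$ to normalizing the Zariski-closed $\graph(\alpha_U)$ via the uniqueness in \cref{SuperRank}, the conjugation computation showing $\VV$ centralizes $\Zar{\alpha(U_\Gamma)}$, centrality of $\unip \Zar{\alpha(M_\Gamma T_\Gamma)}$ from Assumption~\fullref{GenSuper}{Msuper}, and the appeal to \pref{nocentraltori} to conclude that $\VV$ is unipotent and central. The only cosmetic difference is that you call a reductive subgroup of $\VV$ ``normal'' in the final contradiction, which is neither needed nor justified, since \pref{nocentraltori} applies to every reductive subgroup.
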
%

Since $\BB \cap \bigl( \{e\} \times \VV \bigr)$ is trivial, the projection from $\BB$ onto~$\MM\TT$ has trivial kernel, so it is an isomorphism of algebraic groups. Therefore, $\BB$ is the graph of a rational homomorphism $\widehat\alpha_{MT} \colon \MM\TT \to \Zar{\alpha(M_\Gamma T_\Gamma)}$ that is defined over~$\rational_p$.

Define $\alpha_V \colon M_\Gamma T_\Gamma \to \VV(\rational_p)$ by $\widehat\alpha_{MT}(g) = \alpha(g) \, \alpha_V(g)$. (This is a homomorphism, since $\VV$ is central.)
Since $\VV$ is abelian, Assumption~\fullref{GenSuper}{Msuper} tells us that $\alpha_V(M_\Gamma)$ is trivial (after passing to a finite-index subgroup). Also, since $T_\Gamma$ is finitely generated \cite[Thm.~5.12, p.~176]{PlatonovRapinchukBook}, and $\VV$ is unipotent, we know that $\alpha_V(T_\Gamma)$ is contained in a compact subgroup~$K_V$ of~$\VV(\rational_p)$ \cite[Prop.~2.6.3, p.~46]{Abels-FinPres}.
Now, since $\BB \subseteq \Zar{\graph(\alpha|_{M_\Gamma T_\Gamma})}$ normalizes $\graph(\alpha_U)$, 
the product $\BB \cdot \graph(\alpha_U)$ is a subgroup of~$\GG \times \Zar{\alpha(\Gamma)}$. It is the graph of a rational homomorphism $\widehat\alpha \colon \GG \to \Zar{\alpha(\Gamma)}$ that is defined over~$\rational_p$, and satisfies
	$$ \text{$\widehat\alpha( gu ) = \widehat\alpha_{MT}(g) \, \alpha_U(u)$
	\ for $g \in \MM\TT$ and $u \in \UU$.} $$
Then, for $g \in M_\Gamma T_\Gamma$ and $u \in U_\Gamma$, we have
	$$ \widehat\alpha( gu ) 
	= \widehat\alpha_{MT}(g) \cdot \alpha_U(u)
	=  \alpha(g) \, \alpha_V(g) \cdot  \alpha(u)
	\in \alpha(g) K_V \cdot \alpha(u)
	= \alpha(gu) \cdot K_V .$$
Since $K_V \subset \VV$ is central in $\Zar{\alpha(\Gamma)}$, this completes the proof of this \lcnamecref{GenSuperPf-S}.

\begin{case} \label{GenSuperPf-notS}
Assume $p \notin S$.
\end{case}
Write $\Zar{\alpha(T_\Gamma)} = \RR \times \VV$, where $\RR$ is a torus and $\VV$ is unipotent. Since $\rational_p^\times = \langle p \rangle \times \text{compact}$, we may write $\RR(\rational_p) = Z \times E$, where $Z$ is free abelian, $E$~is compact, and every eigenvalue of every element of~$Z$ is a power of~$p$. Let $\alpha_Z$ be the projection of~$\alpha|_{T_\Gamma}$ to~$Z$, and extend it to a continuous homomorphism $\widehat\alpha \colon T_\Gamma   \MM_S K_T \UU_S \to Z$, by defining $\widehat\alpha$ to be trivial on the open, normal subgroup $\MM_S K_T \UU_S$.

Since $M_\Gamma \subseteq \MM(\integer_S)$ (up to finite index) and $p \notin S$, we know that $M_\Gamma$ is contained in the compact subgroup $\MM(\integer_p)$ of~$\MM(\rational_p)$. Hence, Assumption~\fullref{GenSuper}{Msuper} implies that $\alpha(M_\Gamma)$ is contained in a compact subgroup~$K_M$ of $\GL_n(\rational_p)$. Of course, we may assume $K_M \subseteq \Zar{\alpha(M_\Gamma)}$.
Also, since $T_\Gamma$ is finitely generated \cite[Thm.~5.12, p.~176]{PlatonovRapinchukBook}, the $p$-adic closure of the projection of~$\alpha(T_\Gamma)$ to~$\VV(\rational_p)$ is a compact subgroup $K_V$ \cite[Prop.~2.6.3, p.~46]{Abels-FinPres}. 

Furthermore, we now show that the $p$-adic closure of $\alpha(U_\Gamma)$ is a  compact subgroup~$K_U$. 
Write $\Zar{\alpha(U_\Gamma)} = \CC \times \WW$, where $\CC$~is a torus and $\WW$~is unipotent, and let $\alpha_C$ and~$\alpha_W$ be the projections of $\alpha|_{U_\Gamma}$ to the two direct factors. Just as in \cref{GenSuperPf-S}, we see that $\alpha_C(U_\Gamma)$ is contained in a compact subgroup~$K_C$ of $\CC(\rational_p)$.  Now, let $K$ be the $p$-adic closure of $\alpha_W \bigl(\UU(\integer) \cap \Gamma\bigr)$ in $\WW(\rational_p)$. Since $\UU(\integer)$ is finitely generated, we know that $K$ is compact \cite[Prop.~2.6.3, p.~46]{Abels-FinPres}. Let
	$$ \text{$K_W$ be the $p$-adic closure of 
	$\{\, \root m \of w \mid w \in K, \ m \in \integer^+, \ p \nmid m \,\} 
	\subseteq \WW(\rational_p)$} .$$
Then $K_W$ is also a compact subgroup of $\WW(\rational_p)$. (It is a subgroup by \cite[Prop.~2.4.3(a), p.~34]{Abels-FinPres}. It is compact because dividing an element of the Lie algebra of $\WW(\rational_p)$ by a scalar that is coprime to~$p$ does not change the $p$-adic norm of the vector.) After passing to a finite-index subgroup, Assumption~\fullref{GenSuper}{GZ} tells us that $U_\Gamma \subseteq \WW(\integer_S)$. Then, since $p \notin S$, we have $\alpha_W(U_\Gamma) \subseteq K_W$,
so $\alpha(U_\Gamma) \subseteq K_C K_W$ is contained in a compact subgroup, as desired.

For $m \in M_\Gamma$, $t \in T_\Gamma$ and $u \in U_\Gamma$, we have
	\begin{align*}
	 \widehat\alpha(mtu) 
		&= \alpha_Z(t)
		\in \alpha(t) \cdot E K_V
		= \alpha(m)^{-1} \cdot \alpha(m) \alpha(t) \alpha(u) \cdot \alpha(u^{-1}) E K_V
		\\&\subseteq K_M \cdot \alpha(mtu) \cdot K_U E K_V
		= \alpha(mtu) \cdot K_M K_U E K_V
		. \end{align*}
\cref{CpctSubsGenCpct} tells us that the closure of $\langle K_M K_U E K_V \rangle$ is compact.

To complete the proof, all that remains is to show that $\langle K_M K_U E K_V \rangle$ is centralized by $\widehat\alpha(\GG_S^\Gamma)$.
Since $\TT$ normalizes~$\UU$, we know that $\alpha(T_\Gamma)$ normalizes~$K_U$. It also normalizes (in fact, centralizes) $K_M$, $E$, and~$K_V$, since all three groups are contained in $\Zar{\alpha(M_\Gamma T_\Gamma)}$, whose center contains $\Zar{\alpha(T_\Gamma)}$. Therefore, $\alpha(T_\Gamma)$ normalizes $\langle K_M K_U E K_V \rangle$, which is also normalized by $E K_V$. Since $\alpha_Z(T_\Gamma) \subseteq \alpha(T_\Gamma) E K_V$, we conclude that $\alpha_Z(T_\Gamma)$ normalizes the closure of $\langle K_M K_U E K_V \rangle$, which is compact (as was already noted at the end of the preceding paragraph). However, any element~$z$ of~$Z$ is diagonalizable (since it is in a torus) and all of its eigenvalues are in~$\rational_p$ (indeed, they are powers of~$p$), so $z$~is diagonalizable over~$\rational_p$. Since all of its eigenvalues are are powers of~$p$, this implies that $z$~must centralize any compact subgroup of $\GL_n(\rational_p)$ that it normalizes. We conclude that $\widehat\alpha(\GG_S^\Gamma) = \alpha_Z(T_\Gamma)$ centralizes $\langle K_M K_U E K_V \rangle$, as desired.
\end{proof}

\begin{rem} \label{charp}
To complete the proof of \cref{GenSuper}, we treat the case where the characteristic of~$L$ is nonzero, by adapting \cref{GenSuperPf-notS} of the above proof.
Since $T_\Gamma$ and $U_\Gamma$ have finite Hirsch rank, and unipotent $L$-groups are torsion,  we may assume, after passing to a subgroup of finite index, that $\Zar{\alpha(T_\Gamma U_\Gamma)}$ is a torus~$\RR$.
Letting $p$ be a uniformizer of~$L$, we have $L^\times = \langle p \rangle \times \text{compact}$, so we may write $\RR(L) = Z \times E$, where $Z$ is free abelian, $E$~is compact, and every eigenvalue of every element of~$Z$ is a power of~$p$. Let $\alpha_Z$ be the projection of~$\alpha|_{T_\Gamma}$ to~$Z$, and extend it to a continuous homomorphism $\widehat\alpha \colon T_\Gamma   \MM_S K_T \UU_S \to Z$, by defining $\widehat\alpha$ to be trivial on the open, normal subgroup $\MM_S K_T \UU_S$.
From Assumption~\fullref{GenSuper}{UnoZ}, we see that $\alpha(U_\Gamma) \subseteq E$.
Also, since $\Char L \neq 0$, Assumption~\fullref{GenSuper}{Msuper} tells us that $\alpha(M_\Gamma)$ is contained in a compact subgroup~$K_M$ of $\GL_n(L)$. Then, for $m \in M_\Gamma$, $t \in T_\Gamma$ and $u \in U_\Gamma$, we have
	\begin{align*}
	 \widehat\alpha(mtu) 
		&= \alpha_Z(t)
		\in \alpha(t) \cdot E
		= \alpha(m)^{-1} \cdot \alpha(m) \alpha(t) \alpha(u) \cdot \alpha(u^{-1}) E
		\\&\subseteq K_M \cdot \alpha(mtu) \cdot E
		= \alpha(mtu) \cdot K_M E
		. \end{align*}
Furthermore, we may assume $K_M$ is contained in $\Zar{\alpha(M_\Gamma)}$, which centralizes $\RR(L) = Z \times E$ (assuming, as we may, that $\Zar{\alpha(M_\Gamma)}$ is connected), so $K_M E$ is a compact subgroup that centralizes $\widehat\alpha(\Gamma)$ (since $\widehat\alpha(\Gamma) \subseteq Z$).
\end{rem}

We will also use the following refinement of \cref{GenSuper}:

\begin{cor} \label{GenSuperFactor}
Assume the notation and hypotheses of \cref{GenSuper}. For each $p \in S$, choose\/ $\rational_p$-subgroups\/ $\AA_p$ and\/~$\BB_p$ of\/~$\GG$, such that 
\noprelistbreak
	\begin{enumerate} \renewcommand{\theenumi}{\roman{enumi}}
	\item $\GG = \AA_p \times \BB_p$ \textup(up to finite index\textup),
	and
	\item \label{GenSuperFactor-cpct}
	the projection of\/ $\Gamma$ to\/ $\BB_p$ is contained in a compact subgroup~$K_p$ of\/ $\BB_p(\rational_p)$.
	\end{enumerate}
Let 
\noprelistbreak
	\begin{itemize}
	\item $\AA_S = \GG(\real) \times \bigtimes_{p \in S} \AA_p(\rational_p)$,
	and
	\item $\AA_S^\Gamma$ be the image of\/ $\GG_S^\Gamma$ under the natural projection $\pi\!_A \colon \GG_S \to \AA_S$ with kernel\/ $\BB_S^f = \bigtimes_{p \in S} \BB_p(\rational_p)$.
	\end{itemize}
Then\/ $\Gamma$ \textup(or, more precisely, $\pi\!_A(\Gamma)$\textup) is $L$-superrigid in\/~$\AA_S^\Gamma$.
\end{cor}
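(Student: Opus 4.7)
My plan is to reduce \cref{GenSuperFactor} directly to \cref{GenSuper} by pulling back $\beta$ along $\pi\!_A$ and then descending the resulting extension. Given $\beta \colon \pi\!_A(\Gamma) \to \GL_n(L)$, I set $\alpha := \beta \circ \pi\!_A \colon \Gamma \to \GL_n(L)$ and apply \cref{GenSuper} to $\alpha$. This yields a finite-index open subgroup $G_0 \le \GG_S^\Gamma$, a finite-index subgroup $\Gamma_0 \le \Gamma \cap G_0$, a finite normal subgroup $N$ of $H$ (the $L$-points of $\Zar{\alpha(\Gamma_0)}$), a continuous homomorphism $\widehat\alpha \colon G_0 \to H/N$, and a compact $K \le H/N$ centralizing $\widehat\alpha(G_0)$, with $\alpha(\gamma) N \in \widehat\alpha(\gamma) K$ for every $\gamma \in \Gamma_0$. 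The remaining task is to show that $\widehat\alpha$ factors, modulo a compact central subgroup, through $\pi\!_A$.

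Hypothesis~(i) gives $\GG_S = \AA_S \times \BB_S^f$ up to finite index, so after shrinking $G_0$ I can arrange $G_0 = G_0^A \times G_0^B$ with $G_0^A \le \AA_S$ and $G_0^B \le \BB_S^f$, and after a further reduction of $\Gamma_0$ I can assume $\Gamma_0 \subseteq G_0$. The candidate for the descended homomorphism is $\widehat\beta := \widehat\alpha|_{G_0^A} \colon G_0^A \to H/N$, and $G_0^A = \pi\!_A(G_0)$ is open and of finite index in $\AA_S^\Gamma = \pi\!_A(\GG_S^\Gamma)$ because $\pi\!_A$ is an open quotient map with kernel $\BB_S^f$.

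To verify the superrigidity relation, I take $\gamma \in \Gamma_0$ and write $\gamma = \gamma^A \gamma^B$ with $\gamma^A \in G_0^A$, $\gamma^B \in G_0^B$. Hypothesis~(ii) places $\gamma^B$ in the compact subgroup $K^f := \prod_{p \in S} K_p$ of $\BB_S^f$. Since $G_0^A$ and $G_0^B$ commute and $\widehat\alpha$ is a homomorphism, $\widehat\alpha(\gamma) = \widehat\beta(\gamma^A) \cdot \widehat\alpha(\gamma^B)$, with $\widehat\alpha(\gamma^B)$ ranging over the compact set $K^\dagger := \widehat\alpha(G_0^B \cap K^f)$. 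The commutativity of the two factors gives that $K^\dagger$ centralizes $\widehat\beta(G_0^A)$, and the hypothesis that $K$ centralizes $\widehat\alpha(G_0)$ implies $K$ centralizes both $\widehat\beta(G_0^A)$ and $K^\dagger$; hence $K' := K \cdot K^\dagger$ is a compact subgroup of $H/N$ centralizing $\widehat\beta(G_0^A)$. Using $\alpha(\gamma) = \beta(\gamma^A)$, the relation $\alpha(\gamma) N \in \widehat\alpha(\gamma) K$ rearranges to $\beta(\gamma^A) N \in \widehat\beta(\gamma^A) K'$ for every $\gamma \in \Gamma_0$. Setting $\Lambda_0 := \pi\!_A(\Gamma_0)$, which has finite index in $\pi\!_A(\Gamma) \cap G_0^A$, completes the verification.

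The main obstacle I anticipate is purely bookkeeping: the decomposition $\GG = \AA_p \times \BB_p$ only holds up to finite index, so I must perform several finite-index passes through both $G_0$ and $\Gamma_0$ to secure the clean splitting $G_0 = G_0^A \times G_0^B$ with $\Gamma_0 \subseteq G_0$, all while preserving the output of \cref{GenSuper}. Once the splitting is in place, the commutativity of the two factors converts the compact image $\pi\!_\BB(\Gamma) \subseteq K^f$ directly into a compact central error in $H/N$, and everything else is formal.
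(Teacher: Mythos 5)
The step that breaks is the very first reduction: ``after shrinking $G_0$ I can arrange $G_0 = G_0^A \times G_0^B$.'' Hypothesis (i) splits $\GG_S$ (up to finite index), but \cref{GenSuper} only hands you $\widehat\alpha$ on a finite-index open subgroup $G_0$ of $\GG_S^\Gamma = T_\Gamma\, \MM_S K_T \UU_S$, and this group is \emph{not} compatible with the product $\AA_S \times \BB_S^f$: the factor $T_\Gamma$ sits ``diagonally,'' so a typical element has nontrivial, correlated components in both $\AA_S$ and $\BB_S^f$, and passing to finite-index subgroups never converts a diagonal into a direct product (compare the diagonal copy of $\integer$ in $\integer \times \integer$). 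In particular it is not even clear that $\gamma^A = \pi\!_A(\gamma)$ and $\gamma^B$ lie in $\GG_S^\Gamma$, let alone in $G_0$, so the identity $\widehat\alpha(\gamma) = \widehat\beta(\gamma^A)\,\widehat\alpha(\gamma^B)$, on which your entire descent rests, is unjustified as written. To repair it you would have to prove that a finite-index subgroup of $\GG_S^\Gamma$ genuinely splits as ($\AA$-part)$\times$($\BB$-part); that forces you to show the $\BB$-components of $T_\Gamma$, $\MM_S$, $K_T$, $\UU_S$ land back inside $\GG_S^\Gamma$, which uses hypothesis~\pref{GenSuperFactor-cpct} essentially (to push the $\BB$-component of $T_\Gamma$ into a compact subgroup, hence into something like $K_T$) and also an alignment of $(\AA_p,\BB_p)$ with the decomposition $\MM\TT\UU$ that is not among the stated hypotheses. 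That is the mathematical heart of the descent, not bookkeeping. (A smaller instance of the same optimism: $\pi\!_A(G_0)$ is not open in $\AA_S^\Gamma$ merely ``because $\pi\!_A$ is an open quotient map'' --- the restriction of $\pi\!_A$ to the subgroup $\GG_S^\Gamma$, which does not contain the kernel $\BB_S^f$, need not be open; taking $G_0 \cap \AA_S$ instead is easy, but then you need the splitting again to relate it to $\Gamma_0$.)

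This is precisely why the paper does not treat \cref{GenSuper} as a black box. Its proof of the corollary re-enters the proof of \cref{GenSuper} and observes that every extension constructed there is of one of two special kinds: either it extends to a continuous homomorphism on all of $\GG_S$ --- in which case one restricts to $\AA_S$ and absorbs $\widehat\alpha$ of the closure of the projection of $\Gamma$ to $\BB_S^f$ (compact by hypothesis~\pref{GenSuperFactor-cpct}) into the bounded error --- or it factors through the projection $\GG_S^\Gamma \to T_\Gamma$, in which case one uses the injectivity of $\pi\!_A$ on $\Gamma$ to refactor through $\AA_S^\Gamma \to \pi\!_A(T_\Gamma)$. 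Your commutativity-plus-compactness mechanism is the same in spirit as the first case, but by insisting on a formal deduction from the \emph{statement} of \cref{GenSuper} you collide with exactly the difficulty (the diagonal $T_\Gamma$) that the second case of the paper's argument is designed to handle. Either supply the splitting of $\GG_S^\Gamma$ in full detail, or follow the paper and argue from the structure of the extensions actually produced in the proof of \cref{GenSuper}.
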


\begin{proof}
Suppose $\alpha \colon \pi\!_A(\Gamma) \to \GL_n(L)$. Composing with $\pi\!_A$ yields a homomorphism $\alpha' \colon \Gamma \to \GL_n(L)$.
The proof of \cref{GenSuper} constructs only two kinds of extensions of~$\alpha'$. Namely, if we ignore a bounded error (and ignore passing to finite-index subgroups), and if, in \cref{GenSuperPf-S}, we consider only a single component $\alpha'_N$, $\alpha'_C$, or~$\alpha'_H$, then either:
	\begin{enumerate} \renewcommand{\theenumi}{\alph{enumi}}
	\item \label{GenSuperFactorPf-extend}
	$\alpha'$ extends to a continuous homomorphism $\widehat\alpha \colon \GG_S \to \GL_n(L)$,
	or
	\item \label{GenSuperFactorPf-T}
	$\alpha'$ factors through the projection $\pi_T \colon \GG_S^\Gamma \to T_\Gamma$.
	\end{enumerate}

In situation~\pref{GenSuperFactorPf-extend}, let 
	\begin{itemize}
	\item $\widetilde\alpha$ be the restriction of~$\widehat\alpha$ to $\AA_S$,
	and
	\item $K$ be the closure of the projection of~$\Gamma$ to~$\BB_S^f$.
	\end{itemize}
For $\gamma \in \Gamma$ we have (up to bounded error):
	$$ \alpha\bigl( \pi\!_A(\gamma) \bigr) 
	= \alpha'(\gamma)
	= \widehat\alpha(\gamma) 
	\in \widehat\alpha\bigl( \pi\!_A(\gamma) \bigr)  \cdot \widehat\alpha(K)
	= \widetilde\alpha\bigl( \pi\!_A(\gamma) \bigr) \cdot \widehat\alpha(K) .$$
From Assumption~\fullref{GenSuperFactor}{cpct}, we know that $K$ is a compact subgroup of~$\BB_S^f$. Since $\widehat\alpha$ is continuous, this implies that $\widehat\alpha(K)$ is compact. So $\alpha$ agrees with $\widetilde\alpha$ up to a bounded error on $\pi\!_A(\Gamma)$.

We now consider situation~\pref{GenSuperFactorPf-T}. Write 
	$\AA_S = M\!_A \, T\!_A \, U\!_A$,
where $M\!_A$~ is semisimple, $T\!_A$~is a torus, and $U\!_A$ is the unipotent radical. Let $C\!_A/(M\!_A U\!_A)$ be the (unique) maximal compact subgroup of the abelian group $\AA_S/(M\!_A U\!_A)$. Then we have 
	$$\AA_S^\Gamma = \pi\!_A(\GG_S^\Gamma) = \pi\!_A(T_\Gamma) \cdot C\!_A .$$
Assume $T_\Gamma$ is torsion free (by passing to a subgroup of finite index). Then, since Assumption~\fullref{GenSuperFactor}{cpct} implies that $\pi\!_A(T_\Gamma)$ is discrete in~$T\!_A$, we see that $\pi\!_A(T_\Gamma) \cap C\!_A = \{e\}$. 
Also note that the restriction of $\pi\!_A$ to~$\Gamma$ is bijective (since $\Gamma$ embeds in $\GG(\real)$, which is one of the factors in the definition of~$\AA_S$). Since we are in situation~\pref{GenSuperFactorPf-T}, this implies that $\alpha$~must factor through the projection $\AA_S^\Gamma \to \pi\!_A(T_\Gamma)$.
Therefore, $\alpha$~can be extended to a continuous homomorphism defined on $\AA_S^\Gamma$, by specifying that the extension is trivial on the open, normal subgroup~$C\!_A$. 

Finally, we remark that $\pi\!_A(\Gamma)$ can be identified with~$\Gamma$, since the restriction of $\pi\!_A$ to~$\Gamma$ is bijective (as was noted above).
\end{proof}

\section{Solvable groups} \label{SolvSect}

In this section, we use \cref{GenSuper} to establish several results on the superrigidity of $S$-arithmetic subgroups of solvable groups, after we recall the following useful observation.

\begin{lem}[{}{\cite[Lem.~7.5.4, p.~164]{Abels-FinPres}}] \label{noZ}
Let 
\noprelistbreak
	\begin{itemize}
	\item $F$ be an algebraic number field,
	\item $\UU$ be a unipotent algebraic group defined over~$F$,
	\item $S$ be a finite set of places of~$F$,
	and
	\item $\Gamma$ be an $S$-arithmetic subgroup of\/~$\UU$.
	\end{itemize}
If $S$ contains a nonarchimedean place, then\/ $\Gamma$ has no infinite, cyclic quotients.
\end{lem}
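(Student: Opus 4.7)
My plan is to prove the lemma by induction on $\dim \UU$. For the inductive step, suppose $\UU$ is non-abelian, and let $\ZZ$ be the center of $\UU$, which is a nontrivial connected $F$-subgroup of strictly smaller dimension. Then $\Gamma \cap \ZZ$ is commensurable with $\ZZ(\ints_S)$, hence is $S$-arithmetic in $\ZZ$; and the image of $\Gamma$ in $\UU/\ZZ$ is, up to finite index, $S$-arithmetic in $\UU/\ZZ$ (a standard fact for unipotent $F$-groups in characteristic zero, since the map $\UU(\ints_S) \to (\UU/\ZZ)(\ints_S)$ has finite-index image). Given any homomorphism $\phi \colon \Gamma \to \integer$, the inductive hypothesis applied to $\ZZ$ forces $\phi|_{\Gamma \cap \ZZ} = 0$, so $\phi$ factors through $\Gamma/(\Gamma \cap \ZZ)$; then the inductive hypothesis applied to $\UU/\ZZ$ forces the factored map to vanish.

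This leaves the base case $\UU$ abelian, so I may take $\UU = \mathbb{G}_a^n$ and $\Gamma$ commensurable with $\ints_S^n$; the claim then reduces to $\mathrm{Hom}_{\integer}(\ints_S, \integer) = 0$. Given $\phi \colon \ints_S \to \integer$, I extend $\rational$-linearly to $\tilde\phi \colon F \to \rational$. By nondegeneracy of the trace form on $F/\rational$ there is a unique $c \in F$ with $\tilde\phi(x) = \mathrm{Tr}_{F/\rational}(cx)$, and the constraint $\tilde\phi(\ints_S) \subseteq \integer$ becomes $\mathrm{Tr}(c\, \ints_S) \subseteq \integer$. Now fix a nonarchimedean $v \in S$ with associated prime $\mathfrak{p} \subset \ints$: for every $n \geq 0$ the fractional ideal $\mathfrak{p}^{-n}$ lies in $\ints_S$ (its only negative valuation is at $v \in S$), so $\mathrm{Tr}(c\, \mathfrak{p}^{-n}) \subseteq \integer$ for every $n$. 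This forces $c \in \mathfrak{p}^n \mathcal{D}^{-1}$ for every $n$, where $\mathcal{D}$ is the different of $F/\rational$. Since $\mathcal{D}^{-1}$ is a fractional ideal (hence has bounded valuation at $v$), this yields $v_\mathfrak{p}(c) = \infty$, i.e.\ $c = 0$, and hence $\phi = 0$.

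The inductive reduction is essentially formal, so the real content lies in the base case. The main obstacle is extracting enough additive divisibility from a single nonarchimedean place $v \in S$; the lever is the ascending chain of fractional ideals $\mathfrak{p}^{-n} \subset \ints_S$, which forces the trace-dual element $c$ to have unbounded $v$-adic valuation. The hypothesis is sharp: if $S$ consisted only of archimedean places then $\ints_S = \ints$ would be a free $\integer$-module of rank $[F:\rational]$, admitting many surjections onto $\integer$.
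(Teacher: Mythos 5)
Your argument is correct. Note, though, that the paper does not prove this lemma at all: it is quoted directly from Abels \cite[Lem.~7.5.4, p.~164]{Abels-FinPres}, so there is no in-paper proof to compare against, and what you have written is a self-contained substitute for that citation. Your reduction (induction on $\dim\UU$ through the center, using that $\Gamma\cap\ZZ$ and the image of $\Gamma$ in $\UU/\ZZ$ are again $S$-arithmetic, which is the standard Borel-type fact in characteristic zero) is routine and sound, and the genuine content is exactly where you put it: the abelian case, i.e.\ $\mathrm{Hom}_{\integer}(\ints_S,\integer)=0$ when $S$ contains a finite place. Your trace-duality argument there is clean and correct: extending $\phi$ to a $\rational$-linear functional on $F$, writing it as $x\mapsto \mathrm{Tr}_{F/\rational}(cx)$, and using $\mathfrak{p}^{-n}\subseteq\ints_S$ to force $c\in\mathfrak{p}^{n}\mathcal{D}^{-1}$ for all $n$, hence $c=0$. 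Two small points you gloss over, both harmless: the passage from ``$\Gamma$ commensurable with $\ints_S^n$'' to the statement about $\ints_S$ itself should be justified (a nonzero $\phi$ on a finite-index subgroup $\Gamma'\subseteq\ints_S^n$ gives a nonzero homomorphism $\ints_S^n\to\integer$ via $x\mapsto\phi(mx)$ with $m=[\ints_S^n:\Gamma']$, since $\integer$ is torsion-free); and in the inductive step one should note that the center of a unipotent group in characteristic zero is connected and defined over $F$, so the induction on dimension really does apply to both $\ZZ$ and $\UU/\ZZ$. Your closing remark on sharpness (for $S\subseteq S_\infty$ one has $\ints_S=\ints\cong\integer^{[F:\rational]}$, which surjects onto $\integer$) matches the role this hypothesis plays in the paper.
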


\begin{proof}[\bf Proof of \cref{SuperRes}]
By passing to a finite-index subgroup, we may assume $\GG$ is connected.
We may also assume $L$ is nonarchimedean, since \cite[Thm.~1.10]{Witte-NonarchSuper} treats the case where $L$ is archimedean. 
Therefore, it suffices to verify the hypotheses of \cref{GenSuper} with $\Zar{\Res{F/\rational} \Gamma}$ in the role of~$\GG$, and $\Char S$ in the role of~$S$.
Since $\Gamma$ is $S$-arithmetic, Assumptions \fullref{GenSuper}{GZ} and~\fullref{GenSuper}{MTU} are immediate.
The semisimple group~$\MM$ is trivial, since $\GG$ is solvable, so Assumption~\fullref{GenSuper}{Msuper} is trivially true. 

See \cref{noZ} for Assumption~\fullref{GenSuper}{UnoZ} if $S$ contains a nonarchimedean place. If not, then, by assumption, we must have $[\GG, \GG] = \unip \GG$, so $[\Gamma,\Gamma]$ is Zariski-dense in $\unip \GG$, and must therefore have finite index in $\Gamma \cap \UU$, so Assumption~\fullref{GenSuper}{UnoZ} is immediate.
\end{proof}

\begin{rem}
Note that if $\Gamma$ is superrigid in~$G$, then $\Gamma$~is also superrigid in $G \times H$, for any group~$H$. Therefore, in the conclusion of \cref{SuperRes}, the group $\Zar{\Res{F/\rational} \Gamma}$ can be replaced with $\Zar{\Res{F/\rational} \Gamma} \cdot \Res{F/\rational} \ZZ$, where $\ZZ$~is the center of~$\GG$.
\end{rem}

\Cref{SuperRes} is somewhat unsatisfactory, because the hypotheses deal with objects $\GG$ and~$S$ that are defined over the number field~$F$, but the conclusion replaces them with corresponding objects over~$\rational$. The following result eliminates this shortcoming, at the expense of a Zariski-density assumption.

\begin{cor} \label{SuperF}
Assume $F$, $\GG$, $S$, and~$\Gamma$ are as in \cref{SuperRes}.
If\/ $\Gamma$ is Zariski-dense in $\Res{F/\rational} \GG$ and either $S$~contains a nonarchimedean place or\/ $[\GG^\circ, \GG^\circ] = \unip \GG$, then\/ $\Gamma$ is $L$-superrigid in\/ $\GG_S^\Gamma$, for every local field~$L$.
\end{cor}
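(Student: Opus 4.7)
The plan is to run the proof of \cref{SuperRes} but replace its appeal to \cref{GenSuper} by an appeal to \cref{GenSuperFactor}, arranging the factorization so that the output group $\AA_{\Char S}^\Gamma$ is exactly $\GG_S^\Gamma$. By \cite[Thm.~1.10]{Witte-NonarchSuper}, I may assume $L$ is nonarchimedean, and by passing to a finite-index subgroup I may assume $\GG$ is connected. Writing $\widetilde\GG = \Res{F/\rational}\GG$, the Zariski-density hypothesis gives $\Zar{\Res{F/\rational}\Gamma} = \widetilde\GG$, and the argument of \cref{SuperRes} then verifies every hypothesis of \cref{GenSuper} with $\widetilde\GG$ in the ambient role and $\Char S$ in the role of~$S$.

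To supply the factorizations demanded by \cref{GenSuperFactor}, I use that restriction of scalars decomposes $\widetilde\GG$ over $\rational_p$ as $\prod_{v \mid p}\Res{F_v/\rational_p}\GG$, the product running over places $v$ of~$F$ above~$p$. For each $p \in \Char S$, let $\AA_p$ collect the factors with $v \in S$ and let $\BB_p$ collect those with $v \notin S$. Hypothesis~(i) of \cref{GenSuperFactor} is then immediate. For hypothesis~(ii), recall that, up to finite index, $\Gamma \subseteq \GG(\ints_S)$, and for any nonarchimedean $v \notin S$ we have $\ints_S \subseteq \ints_v$; hence the projection of $\Gamma$ to $\BB_p(\rational_p)$ lies in the open compact subgroup $\prod_{v \mid p,\, v \notin S}\GG(\ints_v)$.

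Applying \cref{GenSuperFactor} now gives $L$-superrigidity of $\Gamma$ in $\AA_{\Char S}^\Gamma$. Unwinding the products yields $\AA_{\Char S} = \GG_S$, and the natural projection $\pi \colon \widetilde\GG_{\Char S} \to \GG_S$ sends $\widetilde\UU_{\Char S}$ onto $\UU_S$, so on the induced abelian quotient it is merely projection onto a sub-product of local groups, which sends the (unique) maximal compact onto the (unique) maximal compact. Combined with $\pi(\Gamma) = \Gamma$ (which holds because $\Gamma$ embeds into the $\widetilde\GG(\real)$-factor that is common to source and target, exactly as in the last sentence of the proof of \cref{GenSuperFactor}), this gives $\pi(\widetilde\GG_{\Char S}^\Gamma) = \GG_S^\Gamma$, and the corollary follows.

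The step I expect to require the most care is this last identification of $\AA_{\Char S}^\Gamma$ with $\GG_S^\Gamma$: one must reconcile the ``maximal compact modulo unipotent radical'' recipe of \cref{GSDefn}\pref{GSDefn-C} with the description $T_\Gamma \MM_S K_T \UU_S$ used inside \cref{GenSuper}, and then check that these commute with both the decomposition $\widetilde\GG \iso \AA_p \times \BB_p$ over each $\rational_p$ and the projection $\pi$. Everything else in the plan is bookkeeping, since \cref{GenSuperFactor} is designed precisely for this kind of place-splitting.
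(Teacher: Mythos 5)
Your proposal is correct and follows essentially the same route as the paper: reduce to nonarchimedean~$L$ via \cite[Thm.~1.10]{Witte-NonarchSuper}, verify the hypotheses of \cref{GenSuper} exactly as in the proof of \cref{SuperRes}, and then apply \cref{GenSuperFactor} with the restriction-of-scalars splitting $\bigl(\Res{F/\rational}\GG\bigr)(\rational_p) \iso \bigtimes_{v \in S,\, \Char v = p} \GG(F_v) \times \bigtimes_{v \notin S,\, \Char v = p} \GG(F_v)$, the second factor receiving a compact (indeed $\GG(\ints_v)$-bounded) image of~$\Gamma$. The only difference is that you spell out the final identification of $\AA_{\Char S}^\Gamma$ with $\GG_S^\Gamma$, which the paper leaves implicit; that bookkeeping is harmless since superrigidity is insensitive to passing to finite-index open subgroups.
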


\begin{proof}
For each $p \in \Char S$, let 
	\begin{itemize}
	\item $S_p = \{\, v \in S \mid \Char v = p \,\}$,
	
	\item $\displaystyle{ \GG_p = \bigtimes_{v \in S_p} \GG(F_v) }$,
	
	\item $\overline{S_p} = \{\, \text{nonarchimedean places~$v$ of~$F$} \mid \text{$\Char v = p$} \,\}$,
	and 
	
	\item $\GG_{\widehat p} = \bigtimes_{v \in \overline{S_p} \smallsetminus S_p} \GG(F_v)$. 
	\end{itemize}
Since \cite[Thm.~1.10]{Witte-NonarchSuper} treats the case where $L$ is archimedean,
we may assume $L$ is nonarchimedean.
Then the hypotheses of \cref{GenSuper} can be verified as in the proof of \cref{SuperRes}, so \cref{GenSuperFactor} applies, because 
	$$\bigl( \Res{F/\rational}\GG \bigr) \bigl( \rational_p \bigr) 
	\iso \GG (F \otimes_{\rational} \rational_p)
	\iso \GG \Bigl( \bigoplus\nolimits_{v \in \overline{S_p}} F_v \Bigr)
	\iso \bigtimes_{v \in \overline{S_p}} \GG(F_v)
	\iso \GG_p \times \GG_{\widehat p}
	, $$
and the image of $\Gamma$ in $\GG_{\widehat p}$ is compact by definition.
\end{proof}

\begin{cor} \label{UnipSuper}
If\/ $\UU$ is a unipotent algebraic group over an algebraic number field~$F$, and $S$ is a finite set of places of~$F$ that contains a nonarchimedean place, then every $S$-arithmetic subgroup of\/~$\UU$ is $L$-superrigid in\/ $\UU_S$, for every local field~$L$.
\end{cor}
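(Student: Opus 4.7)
The plan is to apply \cref{SuperF} directly with $\GG = \UU$. The two key tasks will be to verify the Zariski-density hypothesis and to identify $\UU_S^\Gamma$ with~$\UU_S$, after which the conclusion follows immediately.

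First I would verify the hypotheses of \cref{SuperRes} (which \cref{SuperF} inherits). Being unipotent, $\UU$ is certainly solvable; $S$ is a finite set of places of~$F$ containing a nonarchimedean place by assumption; and $\Gamma$ is $S$-arithmetic. The only nontrivial point is Zariski-density. Since $\Gamma$ is commensurable with $\UU(\ints_S)$, it suffices to note that $\UU(\ints_S)$ is Zariski-dense in~$\UU$, a standard fact in characteristic zero: a connected unipotent $F$-group is isomorphic as an $F$-variety to affine space via exponentiation, and $\ints_S$ is Zariski-dense in~$F$. The analogous assertion in $\Res{F/\rational}\UU$ (whose $\rational$-points form the same set $\UU(F)$) supplies the Zariski-density hypothesis needed for \cref{SuperF}.

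Next I would identify $\UU_S^\Gamma$ with $\UU_S$. Because $\UU$ is connected with $\unip\UU = \UU$, the group $(\UU^\circ)_S/\UU_S$ appearing in \fullcref{GSDefn}{C} is trivial, so the subgroup~$C$ whose quotient by~$\UU_S$ is maximal compact is simply $\UU_S$ itself. Hence $\UU_S^\Gamma = \Gamma \cdot C = \UU_S$, since $\Gamma \subseteq \UU_S$.

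Applying \cref{SuperF} then yields $L$-superrigidity of $\Gamma$ in $\UU_S^\Gamma = \UU_S$ for every local field~$L$, as desired. I do not anticipate any serious obstacle: all of the real work --- in particular, the use of \cref{noZ} to handle assumption \fullref{GenSuper}{UnoZ} once $S$ contains a nonarchimedean place --- is already absorbed into \cref{SuperRes} and \cref{SuperF}, and the unipotent setting only simplifies the picture (the semisimple factor $\MM$ and central torus $\TT$ in \cref{GenSuper} are trivial, so most of that proof degenerates here).
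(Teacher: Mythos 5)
Your proposal is correct and is essentially the paper's own proof: both reduce the statement to \cref{SuperF}, and your identification $\UU_S^\Gamma = \Gamma \cdot \UU_S = \UU_S$ (since $\UU$ equals its own unipotent radical) is exactly the implicit reason the conclusion of \cref{SuperF} reads as superrigidity in $\UU_S$ here. The one step to state more carefully is the density hypothesis: Zariski-density of $\Gamma$ in $\UU$ as an $F$-group does not formally yield density in the $\rational$-group $\Res{F/\rational}\UU$, which is what \cref{SuperF} actually requires, so your appeal to ``the analogous assertion'' is where the content sits --- it holds because $\ints_S$ spans $F$ as a $\rational$-vector space (not merely because $\ints_S$ is infinite), or, as the paper argues, because $\UU(\ints) \iso \bigl(\Res{F/\rational}\UU\bigr)(\integer)$ is a lattice in $\bigl(\Res{F/\rational}\UU\bigr)(\real)$ and is therefore Zariski-dense.
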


\begin{proof}
Every ($S$-)arithmetic subgroup of~$\UU$ is Zariski-dense in $\Res{F/\rational} \UU$. (Indeed, if $\mathcal{O}$ is the ring of integers of~$F$, then it is well known that $\UU(\mathcal{O}) \iso (\Res{F/\rational} \UU)(\integer)$ is a lattice in $(\Res{F/\rational} \UU)(\real)$, and is therefore Zariski-dense.) Hence, the desired conclusion is immediate from \cref{SuperF}.
\end{proof}

When $F = \rational$, we can say a bit more in this unipotent case.

\begin{defn}
 A (countable) subgroup~$\Gamma$ of a topological group~$G$ is \emph{\underline{strictly} $L$-superrigid} in~$G$ if $\Gamma$ is $L$-superrigid in~$G$ and the subgroup~$K$ in \cref{SuperDefn} can always be taken to be trivial.
\end{defn}

\begin{prop} \label{StrictlyQ}
Let 
\noprelistbreak
	\begin{itemize}
	\item $\UU$ be a unipotent algebraic group defined over\/~$\rational$,
	\item $S$ be a finite set of valuations of\/~$\rational$,
	\item $\Gamma$ be an $S$-arithmetic subgroup of\/~$\UU$,
	and
	\item $L$ be a local field.
	\end{itemize}
If $S$ contains at least one nonarchimedean place, then\/ $\Gamma$ is $L$-superrigid in $\UU_S$.

More precisely, suppose $\alpha \colon \Gamma \to \GL_n(L)$ is any homomorphism. Then:
\noprelistbreak
	\begin{enumerate}
	
	\item \label{StrictlyQ-cpct}
	If either $\Char L \neq 0$, or $L$~is a finite extension of~$\rational_p$, with $p \notin S$, then $\alpha(\Gamma)$ is contained in a compact subgroup of $\GL_n(L)$.
	
	\item \label{StrictlyQ-inS}
	If $L = \rational_p$, for some $p \in S$, then there exists a unique rational homomorphism $\widehat\alpha \colon \UU \to \GGL_n$, defined over\/~$\rational_p$, such that, for some finite-index subgroup\/~$\Gamma\!_0$ of\/~$\Gamma$, we have $\alpha(\gamma) = \widehat\alpha(\gamma)$ for all $\gamma \in \Gamma\!_0$. Therefore, $\Gamma$ is strictly $\rational_p$-superrigid.
	
	\end{enumerate}
\end{prop}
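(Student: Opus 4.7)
The $L$-superrigidity statement follows from \cref{UnipSuper}, so the real content is in the refinements (1) and~(2). I would apply \cref{GenSuper} with $\GG = \UU$, in which case $\MM$, $\TT$, $M_\Gamma$, and $T_\Gamma$ are all trivial and $U_\Gamma = \Gamma$; the hypotheses of \cref{GenSuper} are verified exactly as in the proof of \cref{SuperRes}, using \cref{noZ} for Assumption~\fullref{GenSuper}{UnoZ}.

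For~(1), both subcases are specializations of the proof of \cref{GenSuper}. When $\Char L \neq 0$, \cref{charp} yields (after passing to a finite-index subgroup of~$\Gamma$) that $\Zar{\alpha(\Gamma)}$ is a torus~$\RR$ and that $\alpha(\Gamma) = \alpha(U_\Gamma)$ lies in the compact factor~$E$ of $\RR(L) = Z \times E$. When $L$ is a finite extension of~$\rational_p$ with $p \notin S$, the argument of \cref{GenSuperPf-notS} (with $L$ in place of~$\rational_p$) decomposes $\Zar{\alpha(\Gamma)}$ as $\CC \times \WW$ and produces compact subgroups $K_C \subseteq \CC(L)$ and $K_W \subseteq \WW(L)$ whose product contains $\alpha(\Gamma)$.

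For~(2), the plan is to find a finite-index subgroup $\Gamma_0 \le \Gamma$ on which $\alpha$ takes unipotent values, and then to invoke \cref{SuperRank} to obtain the desired rational extension $\widehat\alpha \colon \UU \to \GGL_n$ defined over~$\rational_p$; uniqueness of $\widehat\alpha$ is forced by the Zariski density of~$\Gamma_0$ in~$\UU$. After replacing $\Gamma$ by a finite-index subgroup, $\Zar{\alpha(\Gamma)}$ is connected and nilpotent (as the Zariski closure of a nilpotent group), so it decomposes as $\CC \times \WW$ with $\CC$ a central torus and $\WW$ unipotent; let $\alpha_C$ denote the projection to $\CC(\rational_p)$. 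It suffices to show that $\alpha_C(\Gamma)$ is finite, for then $\Gamma_0 = \ker \alpha_C$ does the job.

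The argument of \cref{GenSuperPf-S} already places $\alpha_C(\Gamma)$ inside the maximal compact subgroup~$K_C$ of~$\CC(\rational_p)$. Write $K_C = F \times M$ with $F$ the (finite) torsion subgroup and $M$ a torsion-free $\integer_p$-module, and let $\alpha_M$ be the further projection onto~$M$. The key input is that, because $1/p \in \integer_S$, the group $\UU(\integer_S)$ admits $p^k$-th roots for every~$k$ (at least after passing to a fixed finite-index subgroup to absorb the denominators from the Baker--Campbell--Hausdorff formula), so for each $\gamma \in \Gamma$ and each~$k$ one can find an element $\gamma_k \in \Gamma$ and a bounded integer~$n$ (independent of~$k$) with $\gamma_k^{p^k} = \gamma^n$. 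Applying $\alpha_M$ gives $n\,\alpha_M(\gamma) \in p^k M$ for every~$k$, hence $n\,\alpha_M(\gamma) = 0$ by $p$-adic separatedness, and then $\alpha_M(\gamma) = 0$ by torsion-freeness of~$M$. Therefore $\alpha_C(\Gamma) \subseteq F$ is finite, as required. The main obstacle will be making this divisibility argument rigorous --- specifically, arranging that the needed roots of each~$\gamma$ actually lie in~$\Gamma$, which forces one to begin by passing to a suitable finite-index subgroup closed under extracting $p$-th roots.
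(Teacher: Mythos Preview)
Your proposal is correct and follows essentially the same route as the paper: for part~(1) the paper likewise just points to \cref{charp} and \cref{GenSuperPf-notS} of the proof of \cref{GenSuper}, and for part~(2) it also reduces to showing $\alpha_C(\Gamma)$ is finite via the $p$-radicability of a finite-index subgroup of~$\Gamma$ (using $1/p \in \integer_S$), then applies \cref{SuperRank}. The only cosmetic difference is that the paper packages your hands-on divisibility computation as the single clean observation that the maximal compact of $\CC(\rational_p)$ has a finite-index pro-$p$ subgroup and no $p$-radicable group maps nontrivially into a pro-$p$ group---this lets you avoid the ad~hoc $F \times M$ decomposition and the bounded-$n$ bookkeeping, since you can simply pass first to a $p$-radicable finite-index subgroup $\Gamma'$ (abelianization $p$-radicable, then induct on nilpotence class) and argue directly with~$\Gamma'$.
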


\begin{proof}
\pref{StrictlyQ-cpct}~This is implicit in \cref{charp} and \cref{GenSuperPf-notS} of the proof of \cref{GenSuper}.

\pref{StrictlyQ-inS}~Recall that a group~$G$ is said to be ``$p$-radicable'' if every element of~$G$ has a $p$th root in~$G$.
Since $p \in S$, it is obvious that the additive abelian group $\integer_S$ is $p$-radicable. Since $\UU/[\UU, \UU]$ is a direct sum of $1$-dimensional unipotent groups, we conclude that $\Gamma$ has a finite-index subgroup~$\Gamma'$ whose abelianization is $p$-radicable. By a straightforward induction on the nilpotence class of~$\Gamma'$, this implies that $\Gamma'$~is $p$-radicable \cite[Prop.~2.4.2, p.~33]{Abels-FinPres}.

By passing to a finite-index subgroup of~$\Gamma$, we may assume $\Zar{\alpha(\Gamma)}$ is connected. 
Then, since $\alpha(\Gamma)$, like~$\Gamma$, is nilpotent, we may write $\Zar{\alpha(\Gamma)} = \CC \times \VV$, where $\CC$ is a torus and $\VV$ is unipotent. Let $\alpha_C \colon \Gamma \to \CC$ be the projection of~$\alpha$ to~$\CC$.
As in \cref{GenSuperPf-S} of the proof of \cref{GenSuper}, we see that $\alpha_C(\Gamma)$ is contained in the maximal compact subgroup~$E$ of $\CC(\rational_p)$. 

However:
	\begin{itemize}
	\item  $E$ has a finite-index subgroup that is pro-$p$ (cf.\ \cite[Lem.~3.8, p.~138]{PlatonovRapinchukBook}),
	and 
	\item no $p$-radicable group has a nontrivial homomorphism to any pro-$p$ group. 
	\end{itemize}
Therefore, $\alpha_C(\Gamma)$ must be finite. Hence, by passing to a finite-index subgroup, we may assume that $\alpha_C(\Gamma)$ is trivial. Then $\CC$ is trivial, so $\Zar{\alpha(\Gamma)} = \VV$ is unipotent.
Hence, \cref{SuperRank} provides an extension of~$\alpha$ to a rational homomorphism $\widehat\alpha \colon \UU \to \GGL_n$.
\end{proof}

\begin{rem}
Some unipotent $S$-arithmetic groups are strictly superrigid, and others are not.
\begin{enumerate}

\item The assumption that $S$ contains a nonarchimedean place is necessary in \cref{StrictlyQ} (unless $\UU$ is trivial or $L$~is archimedean). To see this, suppose $S$ has no nonarchimedean places and $\UU$ is nontrivial. Then $\UU_S = \UU_\emptyset = \UU(\real)$ is connected, so there is no nontrivial, continuous homomorphism from $\UU_S$ to the totally disconnected group $\rational_p^\times = \GL_1(\rational_p)$. Therefore, if $\Gamma$ were $\rational_p$-superrigid in $\UU_S$, then every homomorphism from $\Gamma$ to~$\rational_p^\times$ would have bounded image.  However, to the contrary, $\Gamma \doteq \UU(\integer)$ is a nontrivial, finitely generated, torsion-free, nilpotent group, so there exists a nontrivial homomorphism from $\Gamma$ to~$\integer$. (That is, Assumption~\fullref{GenSuper}{UnoZ} fails.) Hence, there is a homomorphism $\alpha \colon \Gamma \to \rational_p^\times$ with unbounded image, for any prime~$p$.

\item In the special case of \cref{UnipSuper} in which $S$ contains every valuation~$v$ of~$F$ with $\Char v = p$, the group~$\Gamma$ is \emph{strictly} $\rational_p$-superrigid in $\UU_S$. 
 (In fact, $\Gamma$ is strictly $\rational_p$-superrigid in~$\UU_p$.)
To prove this, note that $1/p$ is an $S$-integer (in other words, $1/p \in \mathcal{O}_S$), so the conclusion follows from the argument in the first and third paragraphs of the proof of \fullcref{StrictlyQ}{inS} (with $\mathcal{O}_S$ in the place of~$\integer_S$).

\item On the other hand, there do exist cases of \cref{UnipSuper} in which the superrigidity of~$\Gamma$ is not strict. 
For example, let $F = \rational[i]$ and $p = 5$. We have $p = ab$ with $a = 2 + i$ and $b = 2 - i$. Let $v$ be the valuation corresponding to the prime ideal $(a)$, so $\Gamma = \integer[i, 1/a]$ is a $\{v\}$-arithmetic subgroup of the one-dimensional unipotent group~$F$. 
Since $p \Gamma = (1/a)p \Gamma = b \Gamma$, and the norm of~$b$ is~$p$, we see that $\Gamma\!/p\Gamma$ is cyclic of order~$p$, so $\integer + p \Gamma = \Gamma$. 
By induction on~$k$, this implies $\integer + p^k \Gamma = \Gamma$ for all $k \in \integer^+$, so $\Gamma \! / p^k \Gamma \iso \integer/p^k \integer$. Passing to the projective limit yields a surjective homomorphism~$\alpha$ from~$\Gamma$ onto $\integer_p$. The resulting composite homomorphism 
	$$ \Gamma \stackrel{\alpha}{\to} \integer_p \hookrightarrow \integer \times \integer_p \times \frac{\integer}{(p-1)\integer} \iso \rational_p^\times $$
does not extend to a continuous homomorphism defined on all of~$\rational_p$ (since $\rational_p$ is $p$-radicable).

\end{enumerate}
\end{rem}

Suppose $\GG$ is a solvable algebraic group over~$\rational$. It is well known that if $\Qrank \GG \neq 0$, then $S$-arithmetic subgroups of~$\GG$ are \emph{not} lattices in~$\GG_S$. Rather, they are lattices in a certain subgroup that is usually denoted~$\GG\one$ \cite[pp.~263--264]{PlatonovRapinchukBook}. In general, a finite-index subgroup of~$\GG_S^\Gamma$ is contained in~$\GG\one$, and may be much smaller, but the groups are commensurable if $\Rrank \GG = \Qrank \GG$. Therefore:

\begin{cor}
Suppose $\Gamma$~is a Zariski-dense, $S$-arithmetic subgroup of a solvable algebraic $\rational$-group\/~$\GG$, and $S$~contains a nonarchimedean place. Assume, by passing to a finite-index subgroup if necessary, that $\Gamma \subseteq \GG\one$. If $\Rrank \GG = \Qrank \GG$, then $\Gamma$~is $L$-superrigid in $\GG\one$, for every local field~$L$.
\end{cor}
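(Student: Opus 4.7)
The plan is to deduce this from \cref{SuperRes} together with the commensurability statement recorded in the paragraph immediately preceding the corollary. The work splits cleanly into two pieces: first, apply \cref{SuperRes} to get superrigidity in $\GG_S^\Gamma$; second, transfer superrigidity across the commensurability $\GG_S^\Gamma \sim \GG\one$.

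For the first step, observe that since $F = \rational$, we have $\Res{F/\rational} \GG = \GG$, and the Zariski-density hypothesis gives $\Zar{\Res{F/\rational} \Gamma} = \GG$. The places of $\rational$ in~$S$ correspond bijectively to their residue characteristics (each prime $p \in S$ yielding $F_v = \rational_p$), so $\bigl(\Zar{\Res{F/\rational} \Gamma}\bigr){}_{\Char S}^\Gamma = \GG_S^\Gamma$. Because $S$ contains a nonarchimedean place, \cref{SuperRes} applies and shows $\Gamma$ is $L$-superrigid in $\GG_S^\Gamma$ for every local field~$L$.

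For the second step, the remark preceding the corollary records that the assumption $\Rrank \GG = \Qrank \GG$ makes $\GG_S^\Gamma$ and $\GG\one$ commensurable subgroups of $\GG_S$. Given a homomorphism $\alpha \colon \Gamma \to \GL_n(L)$, superrigidity in $\GG_S^\Gamma$ supplies an open finite-index subgroup $G_0 \subseteq \GG_S^\Gamma$, a finite-index $\Gamma_0 \subseteq \Gamma \cap G_0$, a finite normal $N \trianglelefteq H$, a continuous $\widehat\alpha \colon G_0 \to H/N$, and a compact $K \subseteq H/N$ centralizing $\widehat\alpha(G_0)$, with $\alpha(\gamma) N \in \widehat\alpha(\gamma) K$ for all $\gamma \in \Gamma_0$. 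Taking $G_0' = G_0 \cap \GG\one$ produces a finite-index subgroup of $\GG\one$ that is open in the subspace topology inherited from $\GG_S$, and the restriction of $\widehat\alpha$ to $G_0'$ together with the unchanged $\Gamma_0$, $N$, and $K$ witnesses $L$-superrigidity of $\Gamma$ in $\GG\one$.

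The substantive content is already contained in \cref{SuperRes}; the only point requiring any care is the transfer of \cref{SuperDefn} across commensurable subgroups, i.e.\ verifying that intersecting $G_0$ with $\GG\one$ yields an open finite-index subgroup of~$\GG\one$. This is not a real obstacle, since commensurability of $\GG_S^\Gamma$ and $\GG\one$ as subgroups of $\GG_S$, together with their shared subspace topology, makes it immediate.
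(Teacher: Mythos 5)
Your proposal matches the paper's own route: the corollary is derived exactly by specializing \cref{SuperRes} to $F=\rational$ (so that $\bigl(\Zar{\Res{F/\rational} \Gamma}\bigr){}_{\Char S}^\Gamma = \GG_S^\Gamma$, since Zariski density gives $\Zar{\Gamma}=\GG$ and the nonarchimedean places of~$S$ correspond to the primes in $\Char S$) and then invoking the commensurability of $\GG_S^\Gamma$ with $\GG\one$ under $\Rrank \GG = \Qrank \GG$, stated in the paragraph immediately preceding the corollary. One small caveat: openness of $G_0 \cap \GG\one$ in $\GG\one$ is not a formal consequence of finite index plus the shared subspace topology alone; it needs a half-line more (e.g.\ that $\GG_S^\Gamma \cap \GG\one$ is a closed, finite-index — hence, by Baire category, open — subgroup of the locally compact group $\GG\one$, or that $C \cap \GG\one$ is open in $\GG\one$ when $\Rrank\GG=\Qrank\GG$), but the paper elides this point in exactly the same way.
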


\section{Groups that are not solvable} \label{NonsolvSect}

Let us recall the famous Margulis Superrigidity Theorem, which tells us that Assumption~\fullref{GenSuper}{Msuper} is often true.

\begin{defn}
Suppose $\GG$ is an algebraic group over an algebraic number field~$F$, and $S$~is a finite set of places of~$F$. The \emph{$S$-rank} of~$\GG$ is
	$ \sum_{v \in S \cup S_\infty} \rank_{F_v} \GG$,
where $S_\infty$ is the set of archimedean places of~$F$.
\end{defn}

\begin{thm}[{Margulis \cite[8.B, pp.~258--259]{MargBook}}]
Suppose
\noprelistbreak
	\begin{itemize}
	\item $\MM$ is a connected, semisimple algebraic group over a number field~$F$,
	\item $S$ is a finite set of places of~$F$,
	and
	\item the $S$-rank of every $F$-simple factor of\/~$\MM$ is at least two. 
	\end{itemize}
Then every $S$-arithmetic subgroup of\/~$\MM$ is semisimply superrigid in\/~$\MM_S$.
 \end{thm}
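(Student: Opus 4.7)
\medskip\noindent\textbf{Proof proposal.}
The plan is to follow Margulis's classical approach \cite[Chs.~V, VI]{MargBook}. First I would reduce to the case that $\MM$~is simply connected and $F$-almost-simple: superrigidity for an $S$-arithmetic subgroup of an almost-direct product follows from superrigidity in each almost-simple factor (using strong approximation to control how $\Gamma$~projects to the product), and any finite central isogeny is absorbed by the finite normal subgroup~$N$ allowed in \cref{SuperDefn}. Under this reduction, $\MM_S$~is semisimple of $S$-rank~$\ge 2$ and $\Gamma$~is an irreducible lattice in~$\MM_S$.

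Given $\alpha \colon \Gamma \to \GL_n(L)$, let $\HH$~be the connected component of the Zariski closure of $\alpha(\Gamma)$, so (after passing to a finite-index subgroup) $\alpha(\Gamma)$ is Zariski-dense in~$\HH$. In positive characteristic, I would show directly that $\alpha(\Gamma)$ has compact closure in $\GL_n(L)$, using finite generation of\/~$\Gamma$ together with the absence of nontrivial unipotent images of an irreducible higher-rank lattice --- this gives conclusion~(2). In characteristic zero, the heart of the argument is the construction of a measurable $\Gamma$-equivariant map
$$\varphi \colon \MM_S/\mathbf{P}_S \longrightarrow \HH(L)/\mathbf{Q}(L),$$
where $\mathbf{P} \le \MM$ is a minimal parabolic defined over some $F_v$ of positive $F_v$-rank and $\mathbf{Q} \le \HH$ is a suitable parabolic. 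Amenability of $\mathbf{P}_S$ together with Furstenberg's boundary theory produces such a~$\varphi$, and then Margulis's algebraicity lemma, invoked via ergodicity of the diagonal $\Gamma$-action on $(\MM_S/\mathbf{P}_S)^2$ through Howe-Moore (this is where the $S$-rank~$\ge 2$ hypothesis enters essentially), forces $\varphi$ to be essentially rational.

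The main obstacle will be executing the algebraicity lemma when $L$~is nonarchimedean: the classical multiplicative ergodic theorem must be replaced by an Oseledets-type analysis on the Bruhat-Tits building of $\HH(L)$, and one must rule out the degenerate scenario in which $\alpha(\Gamma)$ fixes a bounded subset of the building (which would just give a bounded image). Once $\varphi$ is rational, it determines a $\Gamma$-equivariant rational map $\MM/\mathbf{P} \to \HH/\mathbf{Q}$, which by Zariski density of $\Gamma$ in~$\MM$ extends to a rational homomorphism $\MM \to \HH/Z(\HH)$; passing to $L$-points yields the required continuous $\widehat\alpha$ on an open finite-index subgroup of~$\MM_S$. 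Any nontrivial solvable radical of~$\HH$ is ruled out a posteriori (its projection would give an unbounded homomorphism from the irreducible lattice~$\Gamma$ to a solvable $L$-group, contradicting the preceding construction applied to that quotient), which verifies the ``semisimply'' part of the conclusion, and the compact subgroup~$K$ of \cref{SuperDefn} absorbs the finite center of~$\HH$ together with the bounded error accumulated in the reductions above.
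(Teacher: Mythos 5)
This statement is not proved in the paper at all: it is quoted, with attribution, directly from Margulis's book, and is used later only as a black box to verify Assumption~\fullref{GenSuper}{Msuper} in \cref{NonSolv}. So the ``proof'' the paper intends here is a citation, and the most faithful solution would have been exactly that. Your proposal instead sketches Margulis's own argument, and as a roadmap it follows the standard route (reduction to the $F$-almost-simple simply connected case, boundedness in positive characteristic, boundary maps via amenability of a minimal parabolic, rationality, extension to a homomorphism of algebraic groups). But it is a plan rather than a proof: the hardest steps are explicitly deferred (``the main obstacle will be executing the algebraicity lemma when $L$ is nonarchimedean''), and a complete treatment of precisely those steps occupies several chapters of Margulis's book, so nothing short of reproducing that material (or citing it) closes the argument.

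Two specific points in the sketch are also off. First, the claim that the $S$-rank~$\ge 2$ hypothesis enters ``essentially'' through ergodicity of the diagonal $\Gamma$-action on $(\MM_S/\mathbf{P}_S)^2$ is misplaced: that ergodicity follows from Moore's theorem and holds for rank-one lattices as well (for which superrigidity fails); higher rank --- or irreducibility in a product of several factors --- is what drives the rationality/algebraicity step, through the interplay of the various proper parabolic and root subgroups. Second, the ``a posteriori'' elimination of the radical of $\HH$ does not deliver the ``semisimply'' clause as the paper defines it: in characteristic zero one must show that $\Zar{\alpha(\Gamma)}$ itself is semisimple, and the composite of $\alpha$ with the projection modulo the radical's derived group can perfectly well have \emph{bounded} image whose Zariski closure is a nontrivial torus, so unboundedness arguments rule out nothing. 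Killing the torus part requires finiteness of the abelianization of a higher-rank $S$-arithmetic group, and killing the unipotent radical requires a further (cohomological or structural) argument, both of which are part of what Margulis proves and neither of which is supplied here.
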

 
 Combining this with \cref{GenSuper} and \cref{GenSuperFactor} (and the arguments of \cref{SolvSect}) yields:

\begin{cor} \label{NonSolv}
 Let
 \noprelistbreak
 	\begin{itemize}
	\item $\GG$ be a connected algebraic group over a number field~$F$, 
	\item $S$ be a finite set of places of~$F$,
	\item $\Gamma$ be an $S$-arithmetic subgroup of~$\GG$ that is Zariski-dense,
	and
	\item $L$ be a local field.
	\end{itemize}
Assume:
	\begin{itemize}
	\item the $S$-rank of every $F$-simple factor of\/ $\GG/\Rad \GG$ is at least
two,
	and
	\item either $S$ contains a nonarchimedean place or $\unip \GG \subseteq [\GG,\GG]$.
	\end{itemize}
Then:
 \noprelistbreak
	\begin{enumerate}
	\item \label{NonSolv-res}
	$\Gamma$ is $L$-superrigid in\/ $\bigl(\Zar{\Res{F/\rational} \Gamma}\bigr){}_{\Char S}^\Gamma$.
	\item \label{NonSolv-GS}
	If\/ $\Gamma$ is Zariski-dense in $\Res{F/\rational} \GG$, then\/ $\Gamma$ is $L$-superrigid in\/ $\GG_S^\Gamma$.
	\end{enumerate}
 \end{cor}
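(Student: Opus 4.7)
The plan is to follow the proofs of \cref{SuperRes} and \cref{SuperF} almost verbatim: the only novelty is that the semisimple part $\MM$ of the ambient group is no longer trivial, so Assumption~\fullref{GenSuper}{Msuper} of \cref{GenSuper} must be obtained from Margulis's theorem rather than being vacuous. First I would reduce to the case where $L$ is nonarchimedean (the archimedean case being classical Margulis superrigidity) and pass to a finite-index subgroup of~$\Gamma$ so that $\GG$ may be assumed connected.

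Next I would apply \cref{GenSuper} to $\HH := \Zar{\Res{F/\rational}\Gamma}$, with $\Char S$ in the role of the prime set, writing $\HH = \MM\TT\UU$ as in the statement of that theorem. Assumptions~\fullref{GenSuper}{GZ} and~\fullref{GenSuper}{MTU} follow from the fact that (a finite-index subgroup of)~$\Gamma$ is $\Char S$-arithmetic in $\HH$, together with the standard Mostow-type decomposition of $S$-arithmetic groups into semisimple, toral, and unipotent parts. Assumption~\fullref{GenSuper}{UnoZ} is verified exactly as in the proof of \cref{SuperRes}: \cref{noZ} handles the case in which $S$ contains a nonarchimedean place, and the hypothesis $\unip\GG \subseteq [\GG,\GG]$ handles the remaining case. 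For Assumption~\fullref{GenSuper}{Msuper}, the $\rational$-simple factors of $\MM$ are precisely the groups $\Res{F/\rational}\MM_0$ as $\MM_0$ ranges over $F$-simple factors of $\GG/\Rad\GG$, and the identification
\[
(\Res{F/\rational}\MM_0)(\rational_p) \iso \bigtimes_{v \mid p} \MM_0(F_v)
\]
shows that the $\Char S$-rank of $\Res{F/\rational}\MM_0$ dominates the $S$-rank of $\MM_0$, so the rank-$\geq 2$ hypothesis is preserved under restriction of scalars. Margulis's theorem then supplies the semisimple $L$-superrigidity of $M_\Gamma$ in~$\MM_{\Char S}$.

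This completes the hypotheses of \cref{GenSuper} and establishes part~\pref{NonSolv-res}. For part~\pref{NonSolv-GS}, I would invoke \cref{GenSuperFactor} exactly as in the proof of \cref{SuperF}: using the product decomposition $(\Res{F/\rational}\GG)(\rational_p) \iso \GG_p \times \GG_{\widehat p}$, take $\AA_p = \GG_p$ and $\BB_p = \GG_{\widehat p}$. Since the nonarchimedean places contributing to~$\GG_{\widehat p}$ lie outside~$S$, the $S$-arithmeticity of~$\Gamma$ forces its image there to be contained in a compact subgroup, which is hypothesis~\pref{GenSuperFactor-cpct} of \cref{GenSuperFactor}.

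I expect the only substantive point to be the verification that Margulis's rank hypothesis survives restriction of scalars, which is the rank inequality displayed above; everything else is bookkeeping that strings together the solvable-case arguments of \cref{SolvSect} with a single appeal to Margulis for the nontrivial semisimple factor.
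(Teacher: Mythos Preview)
Your proposal is correct and follows essentially the same route the paper sketches: verify the hypotheses of \cref{GenSuper} for $\Zar{\Res{F/\rational}\Gamma}$ exactly as in the proof of \cref{SuperRes}, using Margulis's theorem (with your rank-inequality observation) in place of the trivial verification of \fullref{GenSuper}{Msuper}, and then derive part~\pref{NonSolv-GS} from \cref{GenSuperFactor} as in \cref{SuperF}. One small correction: the archimedean case is not ``classical Margulis superrigidity,'' since $\GG$ is not semisimple; rather, the arguments of \cref{SolvSect} invoke \cite[Thm.~1.10]{Witte-NonarchSuper} for archimedean~$L$, and that result (combined with Margulis for the semisimple part) is what you need here.
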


\begin{ack}
  D.\,W.\,M.\ would like to thank the faculty, staff, and students in the mathematics department of the University of Chicago for their hospitality and financial support during visits while much of this research was carried out.
\end{ack}

\end{document}